\newtheorem{theorem}{Theorem}[section]
\newtheorem{proposition}{Proposition}[section]
\newtheorem{lemma}{Lemma}[section]
\newtheorem{remark}{Remark}[section]
\newtheorem{definition}{Definition}[section]
\newtheorem{example}{Example}[section]
\newtheorem{assumption}{Assumption}[section]
\newlist{thmcases}{enumerate}{1}
\setlist[thmcases]{
  label=\textbf{\upshape Case~\arabic*.},
  leftmargin=*,
  ref={Case~\arabic*}}
\newlist{thmsubcases}{enumerate}{1}
\setlist[thmsubcases]{
  label=\textbf{\upshape Subcase~\arabic*.},
  leftmargin=*,
  ref={\thetheorem.\arabic*}}
\DeclareMathOperator{\WD}{WD}
\DeclareMathOperator{\KL}{KL}
\DeclareMathOperator{\argmax}{arg\,max}
\DeclareMathOperator{\JS}{J-S}
\providecommand{\keywords}[1]
{
%  \small	
  \textbf{\textbf{Key Words.  }} #1
}
\title{Distributionally robust tail bounds based on Wasserstein distance and $f$-divergence}
\author{Corina Birghila\footnotemark[2] \and Maximilian Aigner\footnotemark[3] \and Sebastian Engelke\footnotemark[4]}
\date{}
\begin{document}

\maketitle

\begin{abstract}
In this work, we provide robust bounds on the tail probabilities and the tail index of heavy-tailed distributions in the context of model misspecification. They are defined as the optimal value when computing the worst-case tail behavior over all models within some neighborhood of the reference model. The choice of the discrepancy between the models used to build this neighborhood plays a crucial role in assessing the size of the asymptotic bounds. We evaluate the robust tail behavior in ambiguity sets based on the Wasserstein distance and Csisz{\'a}r $f$-divergence and obtain explicit expressions for the corresponding asymptotic bounds. In an application to Danish fire insurance claims we compare the difference between these bounds and show the importance of the choice of discrepancy measure.

\noindent\keywords{tail index, model misspecification, distributionally robustness, Wasserstein distance, $f$-divergence}
\end{abstract}

\footnotetext[2]{Department of Statistics and Actuarial Science, University of Waterloo, 200 University Ave. W. N2L 3G1 Waterloo, ON, Canada, \texttt{ corina.birghila@uwaterloo.ca}}
\footnotetext[3]{Faculty of Business and Economics, University of Lausanne, 1015 Lausanne, Switzerland, \texttt{maximilian.aigner@unil.ch}}
\footnotetext[4]{Research Center for Statistics, University of Geneva, Boulevard du Pont d'Arve 40, 1205 Geneva, Switzerland, \texttt{sebastian.engelke@unige.ch}}

\section{Introduction}

At the focus of any risk management process lies the accurate estimation of the underlying model. Modelling assumptions, historical data, or dynamics of the driving systems are sources of errors that may affect the model accuracy. The two main sources of uncertainty come from the statistical estimation of the model parameters, and a possible misspecification of the statistical model itself. The former is known as estimation uncertainty and may be quantified by confidence intervals based on asymptotic theory or bootstrapping. The latter is called the epistemic or model uncertainty and it can be addressed by finding the worst-case model within a neighborhood around the estimated reference model $\hat P$ that accounts for the ambiguity of the model choice.

There is large body of literature on the optimal decision under model uncertainty, and in particular, the construction of ambiguity sets. \cite{Jagannathan1977} proposes ambiguity sets with given first two moments, \cite{ShapiroKleywegt2002} consider the convex hull of a finite number of models, while \cite{Calafiore2007} uses Kullback-Leibler divergence ($\KL$) around a reference model $\hat P$ to build the neighborhood. \cite{PflugWozabal2007} use the Wasserstein ball to robustify a portfolio of assets, and \cite{HansenSargent2008} derive alternative models lying at the maximal $\KL$-divergence from the reference model in a multistage case.

Risk assessment becomes crucial when dealing with rare events of a random variable $X$, as for instance $\{X >x\}$ for large $x$. Such tail events have small occurrence probabilities but may have serious impacts. Examples are financial applications where $X$ is the loss of a stock \citep{poo2004}, flood risk where $X$ is the river discharge \citep{eng2014b}, or heatwaves where $X$ is the temperature \citep{eng2017a}. A proper risk analysis is particularly important in the case where $X$ is heavy-tailed, that is, the tail probabilities decay with polynomial rate
\begin{align}\label{tail}
  P(X > x) = L(x) x^{-\beta},
\end{align}
where $L$ is a slowly varying function and $\beta > 0$ is the so-called tail index.
Extreme value theory studies asymptotically motivated approximations of these distributional tails, and numerous statistical methods for the estimation of $P(X > x)$ and $\beta$ in~\eqref{tail} exist \citep[e.g.,][]{EmbrechtsKluppelbergMikosch2013, DeHannFerreira2007}. Since these models are used to extrapolate to quantiles outside the range of the data where model assessment is notoriously difficult, model misspecification cannot be ignored. The tail index $\beta$ in~\eqref{tail} carries particular importance since it determines the heaviness of the underlying distribution and dominates estimates of extreme quantiles. 

There has been an increasing interest in the distributionally robust analysis of heavy-tailed distributions. \cite{SchneiderSchweizer2015} study bounds on the tail index in $\alpha$-divergence neighborhoods around a Pareto distribution. In the case of the Wasserstein distance, \cite{BlanchetMurthy2019} show that the worst-case tail probability can be reformulated in terms of a shift of the reference model $\hat{P}$. \cite{lam2017} adopt a non-parametric approach to determine the worst-case convex tail density that is consistent with the central part of the distribution. Their geometric assumption on the density is satisfied by most of the parametric distributions, and the resulting worst-case tail density has either a bounded support or unbounded support with probability mass at infinity.
\cite{BlanchetHeMurthy2020} study the worst-case tail distribution in a R{\'e}nyi divergence neighborhood around the reference model $\hat{P}$. They show that if the reference model belongs to the maximum domain of attraction of a generalized extreme value distribution, then the worst-case tail model also belongs to the domain of attraction of a generalized extreme value distribution, but with a modified tail index.

In order to address model uncertainty in the distributional tail, in this paper we follow a distributionally robust approach that combines stochastic optimization and extreme value theory. We study the tail probabilities and corresponding robust tail indices of the worst-case distributions in an ambiguity set around the estimated reference model $\hat P$. 
The choice of ambiguity set defines the distributions considered in the robust analysis and may have a strong influence on the worst-case tail. It is thus crucial to understand the theoretical implications of a chosen ambiguity, and the goal of this paper is to provide properties of robust tail estimates for a broad range of ambiguity sets. We assume that $\hat{P}$ is a heavy-tailed probability distribution in the sense of~\eqref{tail}. This includes a wide range of distributions such as generalized extreme value or Pareto distribution with positive shape parameters. We consider the following two classes of ambiguity sets.

\begin{enumerate}[label=(\alph*)]
\item The first ambiguity set is a neighborhood of the reference model $\hat{P}$ defined through a Wasserstein distance, a popular measure of similarity between general probability distributions (see \cite{Villani2008}). Since this distance induces fairly large ambiguity sets, we find that the worst-case tail probabilities are conservative. Moreover, the corresponding robust tail index depends only on the radius of the ambiguity set and the metric used inside the definition of the Wasserstein distance, but is independent of the tail index of the reference distribution.
\item The second class of ambiguity sets are neighborhoods of $\hat{P}$ measured in Csisz{\'a}r $f$-divergence. This class of divergences includes KL, Hellinger and total variation distance, for instance. We prove a representation of the worst-case tail probabilities and the robust tail indices in terms of the function $f$ in the definition of the divergence. As we show in several examples, for Csisz{\'a}r $f$-divergence, the worst-case tail is more sensitive to the tail of the reference model $\hat{P}$ than in the case of Wasserstein distance. Our theory yields some of the results in \cite{BlanchetHeMurthy2020} as a special case.  
\end{enumerate} 

The paper is organized as follows: in Section~\ref{Preliminaries}, we provide a short introduction of extreme value theory, fixing the notations and the basic concepts. In Section~\ref{sec:main}, we define the optimization problem in a general setting. We then solve this problem for ambiguity sets defined by Wasserstein distance and $f$-divergence and provide robust asymptotic bounds for tail probabilities. A numerical example using the Danish fire insurance claims that illustrates our pre-asymptotic and asymptotic bounds is presented in Section~\ref{Numerics}. Some additional technical proofs are gathered in the Appendix.

%===========================================================================================================

\section{Background on extreme value theory}\label{Preliminaries}

Extreme value theory provides mathematical theory and statistical tools for the analysis of rare events and the estimation of high quantiles. There are two different perspectives that give rise to different limiting distributions, which are, however, closely related. The first approach considers normalized maxima of independent and identically distributed copies $X_1,X_2,\dots,X_n$ of some random variable $X$ with distribution function $F$.  Suppose that there exist sequences $(a_n)_{n\geq 1}\in\mathbb{R}_+$ and $(b_n)_{n\geq 1}\in\mathbb{R}$ such that 
the maximum $M_n:=\max(X_1,X_2,\ldots,X_n)$ converges in distribution, that is, 
\begin{align}\label{gev}
\lim_{n\rightarrow\infty}P\bigg(\dfrac{M_n-b_n}{a_n}\leq x\bigg)= \lim_{n\rightarrow\infty} F^n(a_n x+ b_n)=H(x), \quad x\in\mathbb{R},
\end{align}
for some non-degenerate, continuous distribution function $H$. Then $F$ is said to belong to the maximum domain of attraction of $H$ and the Fisher--Tippett--Gnedenko theorem states that $H$ is a generalized extreme value distribution of the form \cite[e.g.,][]{Coles2001}
\[
  H(x)= \exp\bigg[-\bigg(1+\xi\dfrac{x-\mu}{\sigma}\bigg)_+^{-1/\xi}\bigg], \qquad x \in\mathbb R,
\]
where $\xi\in\mathbb{R}$, $\mu\in\mathbb{R}$ and $\sigma>0$ are the shape, location and scale parameters, respectively, and $a_+ = \max(0,a)$ denotes the positive part of a real number $a\in\mathbb R$. The sign of the shape parameter $\xi$ determines the type of the limiting distribution and the tail heaviness of distributions $F$ in its domain of attraction: for $\xi < 0$, $F$ has a finite upper end-point and $H$ is a Weibull distribution; for $\xi = 0$, $F$ is light-tailed and $H$ is a Gumbel distribution; for $\xi > 0$, $F$ is heavy-tailed and $H$ is a Fr{\'e}chet distribution.

The second perspective studies the distribution of the exceedances of the random variable $X$ over a high threshold $u$, that is,
\[
F_u(x):=P(X-u\leq x \mid X>u), \quad x > 0.
\]
Under the same mild assumption that guarantees the convergence in~\eqref{gev}, the distribution of exceedances converges to the generalized Pareto distribution \citep{BalkemaDeHaan1974, Pickands1975}, that is,
\begin{align}\label{gpd}
\lim_{u\uparrow x_F}\sup_{0<x<x_F-u}\vert F_u(x)-G_{\xi,\sigma(u)}(x)\vert =0, 
  \end{align}  
where $x_F=\sup\lbrace x\in\mathbb{R}:\, F(x)<1\rbrace$ is the upper end-point of $F$, and $G_{\xi, \sigma(u)}$ is the generalized Pareto distribution
\[
G_{\xi, \sigma(u)}(x)=
1-\bigg(1+\xi\dfrac{x}{\sigma(u)}\bigg)_+^{-1/\xi}, \quad x \in \mathbb R,
\]
where $\xi$ is the same shape parameter as in the generalized extreme value distribution, and $\sigma(u)>0$ is the scale parameter depending on the threshold $u$.

At this point, the importance of the tail index is worth mentioning. Not only does it characterize the heaviness of a distribution, but it is also preserved regardless of whether the generalized extreme value distribution or the generalized Pareto distribution is chosen to model extreme events. However, the estimation of the shape $\xi$ is notoriously difficult, and there is an increasing interest in developing robust tail index estimators. For instance, \cite{BrazauskasSerfling2000} focus on robust tail estimators under the assumption of an underlying Pareto model and \cite{VandewalleBeirlantChristmannHubert2007} consider a weaker domain of attraction assumption. \cite{DupuisField1998}, \cite{PengWelsh2001} and \cite{JuarezSchucany2004} propose estimators in the case where observations originate from a generalized extreme value and generalized Pareto distributions.

We focus on the case of heavy-tailed distributions, which is the most important for risk assessment.
In that case, it is common to define the tail index $\beta = 1/\xi > 0$. In order to characterize the domain of attraction in this case, we need the definition of regular variation. 

\begin{definition}\normalfont
A positive Lebesgue measurable function $L$ on $(0,\infty)$ is regularly varying at infinity of index $\gamma\in\mathbb{R}$ if
\[
\lim_{x\rightarrow\infty}\dfrac{L(tx)}{L(x)}=t^\gamma, \quad  t>0,
\] 
and we write $L\in\mathcal{R}_{\gamma}$. If $\gamma=0$, $L$ is said to be slowly varying (at infinity) and we write $L\in\mathcal{R}_0$.
\end{definition}
The random variable $X$ with distribution $F$ is in the maximum domain of attraction of the generalized extreme value distribution with positive shape $\xi = 1/\beta$ if and only if the tail $P(X > x)$ is regularly varying at infinity with index $\beta > 0$ in the sense of~\eqref{tail} \citep[e.g.,][]{EmbrechtsKluppelbergMikosch2013}. In this case, both limits in~\eqref{gev} and~\eqref{gpd} exist. Moreover, the tail index $\beta$ corresponds to the number of finite moments (see \cite{EmbrechtsKluppelbergMikosch2013}[Proposition A3.8]).

For more details on extreme value theory and regularly variation we refer to \cite{Coles2001}, \cite{Resnick2007} and \cite{DeHannFerreira2007}.

%===========================================================================================================

\section{Distributionally robust tail bounds}
\label{sec:main}
\subsection{Problem formulation}
\label{Formulation}

Let $(S,\mathcal{B}(S))$ be a measurable space, where $S$ is a Polish space, and let $\mathcal{P}(S)$ be the set of probability measures on $(S,\mathcal{B}(S))$. Assume that a reference distribution $\hat{P}\in\mathcal{P}(S)$ is obtained either from statistical analysis of historical data or expert knowledge. The resulting risk estimates depend on the chosen probability model and model uncertainty cannot be excluded. Following the stream of research in robust optimization (\cite{BenTaletal2013}, \cite{JiangGuan2016}, \cite{EsfahaniKuhn2018}), we construct a set of plausible models around the reference model $\hat{P}$:
\[
\mathcal{P}_{\mathcal D}(\delta):=\lbrace P\in\mathcal{P}(S) :\,\mathcal D(P,\hat{P})\leq\delta\rbrace,
\]
which is called the ambiguity set. The mapping $\mathcal D:\mathcal{P}(S)\times\mathcal{P}(S)\rightarrow\mathbb{R}_+$ is a measure of the discrepancy between the probability measures $P$ and $\hat{P}$ and $\delta\geq 0$ is called the tolerance level or ambiguity radius. The  measure of discrepancy is assumed to satisfy $\mathcal{D}(P,\hat{P})=0$ if and only if $P=\hat{P}$. If $\delta=0$, the ambiguity set $\mathcal{P}_{\mathcal D}(\delta)$ reduces to the singleton $\lbrace\hat{P}\rbrace$. 

In this work, we are interested in analyzing the risk associated with a univariate random variable $X$ with values in $S = \mathbb R_+ = [0,\infty)$ and distribution function $F$. As argued in Section~\ref{Preliminaries}, such a risk crucially relies on the tail of the distribution $F$. For a reference model $\hat{P}$, our aim is therefore to identify the tail behavior of the worst-case exceedance probabilities over high levels $x \in \mathbb R_+$ in a neighborhood of $\hat{P}$, that is, 
\begin{equation*}
  1 - F^*_{\mathcal D}(x) = \sup\left\{ P \lbrace (x,\infty)\rbrace :\, P\in\mathcal{P}_{\mathcal D}(\delta)\right\}, \tag{P}\label{optim}
\end{equation*}
where $\delta\geq 0$ is the tolerance level. For any $x \in \mathbb R_+$, \eqref{optim} is a separate optimization problem and it is \emph{a priori} not guaranteed that $F^*_{\mathcal D}$ is a valid distribution function. It can, however, easily be seen that right-continuity of $F^*_{\mathcal D}$ is inherited by the corresponding property of distribution functions of probability measures in $\mathcal{P}_{\mathcal D}(\delta)$. 

We are particularly interested in the tail index of the worst-case tail $ 1 - F^*_{\mathcal D}$. The choice of the discrepancy $\mathcal{D}$ will strongly influence the analysis and the goal is to study this problem for a large class of choices for $\mathcal D$.

%===========================================================================================================

\subsection{Robust tail bounds for Wasserstein distances}\label{sWasserstein}

In this section we analyze the structure of problem \eqref{optim} when the Wasserstein distance is the chosen measure of discrepancy $\mathcal{D}$ between alternative models. 

\begin{definition}\normalfont
For a metric $\mathrm{d}$ on $S$, the Wasserstein distance of order $r\geq 1$ between probability measures $P$ and $Q$ on $S$ is defined as
\[
\WD_{\mathrm{d},r}(P,Q):=\inf_{\pi\in\Pi(P,Q)}\Big[\mathbb{E}_\pi[\mathrm{d}(X,Y)]^r\Big]^{1/r},
\]
where $\Pi(P,Q):=\big\lbrace \pi\in\mathcal{P}(S\times S)\mid \pi(A\times S)=P(A), \pi(S\times A)=Q(A), \forall A\in\mathcal{B}(S)\big\rbrace$ is the set of all couplings between $P$ and $Q$. 
\end{definition}
The Wasserstein distance defined above is a metric on $\mathcal{P}(S)$; for more on the properties see \citet[Chapter 6]{Villani2008}. Problem \eqref{optim} was analysed in \cite{BlanchetMurthy2019} as a more general optimization problem
\begin{equation}
\Gamma_{\mathcal D}(\hat{P};\delta):=\sup_{P\in\mathcal{P}_{\mathcal D}(\delta)}\int_S g(y)\,dP(y), \tag{P$_1$}\label{optimG}
\end{equation}
where $g:S\rightarrow\mathbb{R}$ is an upper semi-continuous function. The strong duality of the problem above has been proven by several authors, under different assumptions \citep[e.g.,][]{GaoKleywegt2016, BlanchetHeMurthy2020, ZhaoGuan2018, EsfahaniKuhn2018}. The dual problem of \eqref{optimG} is presented in the next theorem; see Theorem 1 in \cite{BlanchetMurthy2019} for a proof.

\begin{theorem} \label{tDual}
Let $g:S\rightarrow\mathbb{R}$ be an upper semi-continuous function with $\mathbb{E}_{\hat{P}}\big[\vert g(X)\vert\big]<\infty$. For any $\lambda\geq 0$, define $\psi_\lambda:S\rightarrow\mathbb{R}\cup\lbrace\infty\rbrace$, $\psi_\lambda(x)=\sup_{y\in S}\lbrace g(y)-\lambda\mathrm{d}(x,y)\rbrace$, for $x\in S$; by convention, $\lambda \mathrm{d}(x,y)=\infty$ whenever $\lambda=0$ and $\mathrm{d}(x,y)=+\infty$. Then,
\begin{enumerate}[label=(\alph*)]
\item strong duality holds, i.e., 
\begin{equation}\label{eqDual}
\Gamma_{\WD}(\hat{P};\delta):=\sup_{\WD_{\mathrm{d},r}(P,\hat{P})\leq\delta}\mathbb{E}_P[g(X)]=\inf_{\lambda\geq 0}\Big\lbrace\lambda\delta+\mathbb{E}_{\hat{P}}\Big[\sup_{z\in S}\lbrace g(z)-\lambda\,\mathrm{d}(X,z)\rbrace\Big]\Big\rbrace;
\end{equation}
\item there exists a dual optimal solution given by the pair $(\lambda^*,\psi_{\lambda^*})$, for some $\lambda^*\geq 0$. Moreover,  $\pi^*\in\bigg\lbrace \pi\in\bigcup_{P\in\mathcal{P}(S)}\Pi(P,\hat{P}):\, \WD_{\mathrm{d},r}(P,\hat{P})\leq\delta\bigg\rbrace$ and the dual $(\lambda^*,\psi_{\lambda^*})$ are primal-dual solutions satisfying \eqref{eqDual} if and only if
\begin{gather}
g(y)-\lambda^*\mathrm{d}(x,y)  = \sup_{z\in S}\lbrace g(z)-\lambda^*\mathrm{d}(x,z)\rbrace, \quad \pi^*-a.s., \label{eqCond1}\\
\lambda^*\bigg(\int_{S\times S} \mathrm{d}(x,y)\,d\pi^*(x,y)-\delta\bigg)  =0.\label{eqCond2}
\end{gather}
Furthermore, if the primal optimal transport plan $\pi^*$ exists, then it is unique if and only if, for $\hat{P}$-almost every $x\in S$, there exists a unique $y\in S$ such that $y\in\argmax_{z\in S}\lbrace g(z)-\lambda^*\mathrm{d}(x,z)\rbrace$.
\end{enumerate}
\end{theorem}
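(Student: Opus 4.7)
The plan is to lift problem~\eqref{optimG} to the space of couplings on $S\times S$, apply Lagrangian duality to the Wasserstein cost constraint, and then verify that the duality gap vanishes. By definition, $\WD_{\mathrm d,r}(P,\hat P)\le\delta$ is equivalent to the existence of a coupling $\pi\in\Pi(P,\hat P)$ with $\int\mathrm d^r\,d\pi\le\delta^r$, so that $\Gamma_{\WD}(\hat P;\delta)$ equals the supremum of $\int g(y)\,d\pi(x,y)$ over couplings with first marginal $\hat P$ and cost bounded by $\delta^r$; the displayed dual uses $\lambda\mathrm d(x,z)$ instead of $\lambda\mathrm d(x,z)^r$, which is consistent with treating $\mathrm d^r$ itself as the ground cost. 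Introducing a multiplier $\lambda\ge 0$ and relaxing the cost constraint yields
\[
\Gamma_{\WD}(\hat P;\delta)\le\lambda\delta+\sup_{\pi:\,\pi(\cdot\times S)=\hat P}\int[g(y)-\lambda\mathrm d(x,y)]\,d\pi(x,y);
\]
since the only remaining constraint is on the marginal, the inner supremum disaggregates pointwise and equals $\mathbb E_{\hat P}[\psi_\lambda(X)]$. Minimizing over $\lambda$ yields the weak inequality in~\eqref{eqDual}. Measurability of $\psi_\lambda$ follows from upper semi-continuity of $g$ and continuity of $\mathrm d$ (the supremum may be reduced to a countable dense subset of $S$), and integrability is controlled by $\mathbb E_{\hat P}[|g(X)|]<\infty$.

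To close the duality gap I would appeal to Fenchel--Rockafellar duality (or Sion's minimax theorem) on $\mathcal P(S\times S)$ equipped with the weak topology. The functional $\pi\mapsto\int g\,d\pi$ is affine and upper semi-continuous, the cost functional $\pi\mapsto\int\mathrm d^r\,d\pi$ is convex and lower semi-continuous, and the feasible set of couplings with fixed first marginal $\hat P$ is weakly compact by Prokhorov's theorem. A Slater-type condition is trivial since $\pi=(\mathrm{id},\mathrm{id})_*\hat P$ has cost $0<\delta^r$. Under these hypotheses the minimax identity $\sup_\pi\inf_\lambda=\inf_\lambda\sup_\pi$ holds, proving part~(a). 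Existence of a primal optimizer $\pi^*$ then follows from weak compactness together with upper semi-continuity of the objective, and existence of a dual optimizer $\lambda^*$ from convexity, lower semi-continuity, and linear coercivity of $\lambda\mapsto\lambda\delta+\mathbb E_{\hat P}[\psi_\lambda(X)]$.

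Conditions~\eqref{eqCond1} and~\eqref{eqCond2} are the standard KKT conditions obtained from the saddle-point identity: \eqref{eqCond1} says that $\pi^*$ is concentrated on the fiber-wise argmax of $g(y)-\lambda^*\mathrm d(x,y)$, and \eqref{eqCond2} is complementary slackness for the cost constraint. The uniqueness assertion follows by direct inspection: if for $\hat P$-a.e.\ $x$ the maximizer $y=T(x)$ of $g(z)-\lambda^*\mathrm d(x,z)$ is unique, then~\eqref{eqCond1} forces $\pi^*=(\mathrm{id},T)_*\hat P$; conversely, a set of positive $\hat P$-mass on which the argmax has cardinality at least two permits routing the mass to distinct maximizers, producing two distinct optimal couplings. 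The main obstacle is the closure of the duality gap: the interchange of $\sup_\pi$ and $\inf_\lambda$ relies on a careful measure-theoretic and topological setup (Polish $S$, weak topology on $\mathcal P(S\times S)$, upper semi-continuity of $g$), while promoting the fiber-wise argmax to a globally defined optimal coupling requires a measurable selection argument.
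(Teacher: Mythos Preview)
The paper does not actually prove this theorem; it is quoted verbatim from \cite{BlanchetMurthy2019} with a pointer to their Theorem~1 for the proof. Your sketch has the right overall architecture (weak duality via Lagrangian relaxation, then an attempt to close the gap), but there is a genuine gap at the step where you invoke Sion or Fenchel--Rockafellar: you assert that the set of probability measures on $S\times S$ with first marginal $\hat P$ is weakly compact by Prokhorov's theorem. This is false for a non-compact Polish space $S$. Tightness of a family of couplings requires control of \emph{both} marginals; with only the first marginal fixed equal to $\hat P$, the second marginal is unconstrained and can escape to infinity. Without that compactness the minimax interchange is not justified by the route you propose, and the claimed upper semi-continuity of $\pi\mapsto\int g\,d\pi$ also becomes delicate since $g$ is not assumed bounded above.

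A related overclaim is your assertion that a primal optimizer $\pi^*$ exists by compactness. The theorem as stated deliberately does \emph{not} guarantee primal existence in general (note the conditional phrasing ``if the primal optimal transport plan $\pi^*$ exists''), and the paper itself remarks, in the proof of Lemma~\ref{lWorst}, that primal existence requires extra growth hypotheses on $g$ and $\mathrm d$ (Assumptions~(A3)--(A4) of \cite{BlanchetMurthy2019}). The proof in \cite{BlanchetMurthy2019} closes the duality gap without relying on compactness of the coupling space: for each $\lambda$ one constructs near-optimal couplings by measurable selection of approximate maximizers of $y\mapsto g(y)-\lambda\mathrm d(x,y)$, and then optimizes over $\lambda$, handling the lack of tightness directly rather than appealing to an abstract minimax theorem.
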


\begin{remark}\label{rem1}\normalfont
From now on and throughout the paper we consider the special case $S=\mathbb R_+$, and we focus on the Wasserstein distance of order $r=1$ with
\begin{align}\label{rmetrik}
\mathrm{d}(y,z):=\vert\varphi(y)-\varphi(z)\vert, \qquad \varphi(y) = y^s, \quad y\in \mathbb R_+,
\end{align}
for some $s\geq 1$. We called $s$ the power of distortion. The corresponding Wasserstein distance between probability measures $P$ and $Q$ is given by 
\[
\WD_{\mathrm{d},1}(P,Q)= \int_0^{\infty} \big\vert F(\varphi^{-1}(y))-G(\varphi^{-1}(y))\big\vert\,dy = \int_0^{\infty} \big\vert F(y)-G(y)\big\vert\varphi'(y)\,dy, 
\]
where $F$ and $G$ are the distribution functions of $P$ and $Q$, respectively. The closed form expression of $\WD_{\mathrm{d},1}$ is obtained similarly to the result in \cite{Vallender1974} for $\varphi(x)=x$, and therefore it is omitted here.
\end{remark}

In the following analysis, we use the notation $\lambda^*(x)$ to emphasize the dependence of the optimal value of the Lagrange multiplier on the input argument $x\in \mathbb R_+$. 

\begin{lemma}\label{lWorst}
Consider the Wasserstein distance as in Remark~\ref{rem1} and the ambiguity set given by 
\[
\mathcal{P}_{\WD}(\delta):=\lbrace P\in\mathcal{P}(S) :\, \WD_{\mathrm{d},1}(P,\hat{P})\leq\delta\rbrace
\]
for a tolerance level $\delta\in (0,\infty)$. Then there exists some $x_0\geq 0$ large enough such that for all $x>x_0$ we have $\lambda^*(x)>0$ and the worst-case distribution in $\mathcal{P}_{\WD}(\delta)$ is of the form 
\begin{equation}\label{eqWWorst}
1-F^*_{\WD}(x):=\sup\lbrace P\lbrace (x,\infty)\rbrace:\, P \in \mathcal{P}_{\WD}(\delta) \rbrace = \hat{P}\bigg(\varphi^{-1}\bigg(\varphi(x)-\dfrac{1}{\lambda^*(x)}\bigg),\infty\bigg).
\end{equation}
\end{lemma}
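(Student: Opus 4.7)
The plan is to apply Theorem~\ref{tDual} with the upper semi-continuous payoff $g(y)=\mathbf{1}_{\{y\geq x\}}$ and to extract the form of the worst-case measure from the primal-dual condition~\eqref{eqCond1}. That the payoff refers to a closed half-line rather than the open interval $(x,\infty)$ used in the statement of the lemma is immaterial here, since $\sup_P P\{(x,\infty)\}$ and $\sup_P P[x,\infty)$ share the same dual value: the conjugate $\psi_\lambda$ is insensitive to whether one uses $\mathbf{1}_{[x,\infty)}$ or $\mathbf{1}_{(x,\infty)}$, because the latter is approached as a supremum from the right.

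First I would compute $\psi_\lambda$ in closed form. Since $\varphi$ is strictly increasing on $\mathbb R_+$, for fixed $x$ the supremum defining $\psi_\lambda(y)=\sup_{z\geq 0}\{g(z)-\lambda|\varphi(y)-\varphi(z)|\}$ splits by whether $z\geq x$ or $z<x$: over $z<x$ the maximum is $0$ (attained at $z=y$ when $y<x$), whereas over $z\geq x$ it is attained at $z=\max(x,y)$, yielding
\[
\psi_\lambda(y)=\begin{cases} 1, & y\geq x,\\[2pt] \max\!\bigl\{0,\,1-\lambda(\varphi(x)-\varphi(y))\bigr\}, & y<x.\end{cases}
\]
Setting $y_\lambda:=\varphi^{-1}\bigl((\varphi(x)-1/\lambda)_+\bigr)$, the dual~\eqref{eqDual} then reduces to minimising
\[
D(\lambda)=\lambda\delta+\hat P[x,\infty)+\int_{y_\lambda}^{x}\bigl(1-\lambda(\varphi(x)-\varphi(y))\bigr)\,d\hat P(y)
\]
over $\lambda\geq 0$, a convex problem because $\psi_\lambda$ is a pointwise supremum of affine functions in $\lambda$.

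Second, I would read off the worst-case measure from~\eqref{eqCond1}. The preceding computation shows the $\pi^*$-a.s.\ maximiser of $z\mapsto g(z)-\lambda^*|\varphi(y)-\varphi(z)|$ is $z^*(y)=x$ on $[y_{\lambda^*},x)$ and $z^*(y)=y$ elsewhere, so the optimal transport plan sends the entire $\hat P$-mass on $[y_{\lambda^*},x)$ to the threshold $x$ (or, in the open-interval formulation, to $x^+$) and leaves the rest fixed. Pushing forward yields
\[
\sup_{P\in\mathcal P_{\WD}(\delta)}P\{(x,\infty)\}=\hat P[y_{\lambda^*},\infty)=\hat P\bigl(\varphi^{-1}(\varphi(x)-1/\lambda^*(x)),\infty\bigr),
\]
the last equality using continuity of $\hat P$ at $y_{\lambda^*}$, which is automatic for the heavy-tailed reference distributions under consideration.

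Finally, it remains to establish that $\lambda^*(x)>0$ once $x$ exceeds some $x_0$, which I expect to be the main obstacle. At $\lambda=0$ one has $\psi_0\equiv 1$, so $D(0)=1$ — the trivial bound. A one-sided derivative computation (the boundary term at $y_\lambda$ does not contribute because the integrand vanishes there) gives
\[
D'(0^+)=\delta-\int_0^{x}\bigl(\varphi(x)-\varphi(y)\bigr)\,d\hat P(y),
\]
and the right-hand integral diverges as $x\to\infty$, since $\varphi(x)\to\infty$ and $\hat P$ is a nondegenerate probability measure on $\mathbb R_+$. Hence $D'(0^+)<0$ for all $x$ beyond some threshold $x_0=x_0(\delta,\hat P)$; by convexity any minimiser $\lambda^*(x)$ must then be strictly positive, and since $D(\lambda)\to\infty$ as $\lambda\to\infty$ (driven by the $\lambda\delta$ term) it lies in $(0,\infty)$. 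The remainder of the lemma is then a mechanical consequence of the closed form of $\psi_\lambda$.
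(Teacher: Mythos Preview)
Your argument is correct and reaches the same conclusion, but the route differs from the paper's in two places. For the positivity of $\lambda^*(x)$, the paper argues by contradiction on the \emph{primal} side: if $\lambda^*(x_n)=0$ along a sequence $x_n\to\infty$, condition~\eqref{eqCond1} forces the worst-case measure to have all its mass on $(x_n,\infty)$, and a direct lower bound on $\WD_{\mathrm d,1}(P_n^*,\hat P)$ then blows up, contradicting $\delta<\infty$. You instead stay on the \emph{dual} side and compute $D'(0^+)$, which is cleaner and avoids invoking existence of a primal optimiser at that stage. For the form of the worst-case tail, the paper appeals to \cite{BlanchetMurthy2019} (their Proposition~5 for existence of $\pi^*$ and their Section~2.4 for the shape), whereas you construct the transport map explicitly from the arg-max structure of $\psi_{\lambda^*}$. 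Your approach is more self-contained; the paper's is shorter because it outsources these steps.

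One small point worth tightening: when you ``read off the worst-case measure from~\eqref{eqCond1}'' you are implicitly using existence of a primal optimum, which you have not verified (the paper checks Assumptions~(A3)--(A4) of \cite{BlanchetMurthy2019} for this). You can sidestep the issue entirely by computing the dual value directly: the first-order condition $D'(\lambda^*)=0$ at the interior minimiser gives $\delta=\int_{y_{\lambda^*}}^x(\varphi(x)-\varphi(y))\,d\hat P(y)$, and substituting back into your expression for $D(\lambda^*)$ collapses it to $\hat P[y_{\lambda^*},\infty)$. Strong duality then yields~\eqref{eqWWorst} without ever touching the primal. This also makes transparent that differentiability of $D$ at $\lambda^*$ (hence the first-order condition) requires $\hat P$ to have no atom at $y_{\lambda^*}$, the same continuity hypothesis you already flag at the end.
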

\begin{proof}
We apply Theorem~\ref{tDual} with the upper semi-continuous function $g(y) = \mathbf{1}_{(x,\infty)}(y)$, $y\in \mathbb R_+$. The existence of the primal solution $\pi^*$ is not guaranteed in general. However, Proposition 2 in \cite{BlanchetMurthy2019} states that a primal solution $\pi^*$ always exists if $S$ is a compact Polish space. If the compactness condition is dropped, it is necessary to impose further topological assumptions on the cost $\mathrm{d}$ and objective function $g$. In our case, Assumptions (A3) and (A4) in \cite{BlanchetMurthy2019} are satisfied and Proposition 5 therein implies existence of the primal solution. Indeed, since for any $y,z\geq 0$, $\vert y^s-z^s\vert\geq \vert y-z\vert^s$, by the superadditive property of $y\mapsto y^s$, Assumption (A3) holds following Example (a) in \cite{BlanchetMurthy2019} after Proposition 5. Their Assumption (A4) holds trivially since our function $g$ is bounded.

Assume now that there exists a sequence $(x_n; n\geq 1)$ of positive numbers with $x_n \to \infty$ as $n\to \infty$ such that $\lambda^*_n:=\lambda^*(x_n)=0$, for all $n\geq 1$. Let $(F^*_n; n\geq 1)$ be the sequence of distribution functions corresponding to the worst-case probability measures $P_n^*$ for the Problem \eqref{optim} with $x = x_n$, and let $\hat{F}$ be the distribution function corresponding to $\hat{P}$. Note that it follows from \eqref{eqCond1} that $F^*_n(x_n)=0$ for all $n\geq 1$ since the worst-case $P_n^*$ must have all of its mass on $(x_n,\infty)$. As the Wasserstein distance between $P^*_n $ and $\hat{P}$ is bounded by $\delta$, it holds that
\begin{equation*}
\begin{split}
\delta & \geq \WD_{\mathrm{d},1}(P^*_n,\hat{P}) = \int_0^\infty\big\vert F^*_n(y)-\hat{F}(y)\big\vert\varphi'(y)\,dy \\
& = \int_0^{x_n}\big\vert F^*_n(y)-\hat{F}(y)\big\vert\varphi'(y)\,dy + \int_{x_n}^\infty\big\vert F^*_n(y)-\hat{F}(y)\big\vert\varphi'(y)\,dy \\ 
& \geq  \int_\varepsilon^{\varphi(x_n)} \hat{F}(\varphi^{-1}(y))\,dy +  \int_{x_n}^\infty \big\vert F^*_n(y)-\hat{F}(y)\big\vert\varphi'(y)\,dy \\ 
& \geq \hat{F}(\varphi^{-1}(\varepsilon))(\varphi(x_n)-\varepsilon)+ \int_{x_n}^\infty \big\vert F^*_n(y)-\hat{F}(y)\big\vert\varphi'(y)\,dy
\end{split}
\end{equation*}
for some $\varepsilon>0$ such that $\hat{F}(\varphi^{-1}(\varepsilon))>0$. Taking the limit on both sides, we obtain a contradiction since the first term tends to $\infty$. We can therefore choose some $x_0>0$ large enough such that $\lambda^*(x_0)>0$ for all $x>x_0$. 

Since $\lambda^*(x)>0$ for all $x>x_0$, the results from Section 2.4 in \cite{BlanchetMurthy2019} imply that the worst-case distribution in a Wasserstein neighbourhood is of the form: 
\[
1 - {F}^*_{\WD}(x) = \hat{P}\bigg\lbrace y:\, \inf\big\lbrace \mathrm{d}(y,z):\, z\in (x,\infty) \big\rbrace \leq\frac{1}{\lambda^*(x)}\bigg\rbrace.
\]
In particular, for $\mathrm{d}(y,z)=\vert\varphi(y)-\varphi(z)\vert$, the tail of the worst-case distribution is of the form \eqref{eqWWorst}.
\end{proof}

The next proposition considers the tail index of the worst-case distribution in a Wasserstein ball around a regularly varying reference model.
 
\begin{proposition}\label{pPower}
Let $\hat{P}\in\mathcal{R}_{-\hat{\beta}}$ be a reference model with a tail index $\hat{\beta}\geq 1$. For a distance $\mathrm{d}$ as in Remark~\ref{rem1} for some $s\geq 1$ with $s < \hat{\beta}$, consider the ambiguity set given by 
\[
\mathcal{P}_{\WD}(\delta):=\lbrace P\in\mathcal{P}(S) :\, \WD_{\mathrm{d},1}(P,\hat{P})\leq\delta\rbrace,
\]
for a tolerance level $\delta\in (0,\infty)$. The tail of the worst-case distribution in \eqref{optim} for $\mathcal{D}$ being the Wasserstein distance behaves like
\begin{equation*}
1-F_{\WD}^*(x) \sim {\delta} x^{-s},\quad x\rightarrow\infty,
\end{equation*}
that is, the worst-case tail index is $\beta^*=s.$
\end{proposition}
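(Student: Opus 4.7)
My plan is to combine the pointwise representation of the worst-case tail from Lemma~\ref{lWorst} with a complementary-slackness identity coming from the Wasserstein constraint, and then extract the leading-order behaviour using regular variation. Writing $T(x):=1-\hat F(a(x))$ with $a(x):=\varphi^{-1}(\varphi(x)-1/\lambda^*(x))$, Lemma~\ref{lWorst} gives $1-F^*_{\WD}(x)=T(x)$, so the claim reduces to proving $\varphi(x)\,T(x)\to \delta$ as $x\to\infty$.

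Since Lemma~\ref{lWorst} also ensures $\lambda^*(x)>0$ eventually, condition \eqref{eqCond2} forces $\WD_{\mathrm{d},1}(P^*_x,\hat P)=\delta$. The optimal transport that realises $P^*_x$ shifts the $\hat P$-mass of $[a,x]$ onto $x^+$ and leaves the rest fixed, so the distribution function of $P^*_x$ agrees with $\hat F$ outside $[a,x]$ and is constant equal to $\hat F(a)$ on $[a,x]$. Substituting this into the Wasserstein formula of Remark~\ref{rem1} (or, equivalently, computing the transport cost $\int_a^x (\varphi(x)-\varphi(y))\,d\hat F(y)$ directly) and integrating by parts yields the scaling identity
\begin{equation*}
\delta \;=\; \bigl(\varphi(x)-\varphi(a)\bigr)\,T(x)\;-\;I(x),\qquad I(x):=\int_a^x \varphi'(y)\,(1-\hat F(y))\,dy\;\ge\;0. \tag{$\ast$}
\end{equation*}

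The core of the proof is asymptotic analysis of ($\ast$). I would first show $a(x)\to\infty$: if $a$ stayed bounded along some $x_n\to\infty$, then $T(x_n)$ would remain bounded below, while the assumption $s<\hat\beta$ makes $\varphi'(y)(1-\hat F(y))\in\mathcal{R}_{s-1-\hat\beta}$ integrable at infinity so $I(x_n)$ is bounded, and hence $(\varphi(x_n)-\varphi(a_n))T(x_n)\to\infty$, contradicting the constant $\delta$. The main technical step is then to show $a(x)/x\to 0$, which I would do by contradiction: on a subsequence along which $a/x\to c\in (0,1]$, slow variation of $L$ gives $T(x)\sim c^{-\hat\beta}L(x)x^{-\hat\beta}$, and both $(\varphi(x)-\varphi(a))T(x)$ and $I(x)$ are then of order $x^{s-\hat\beta}L(x)\to 0$, again contradicting $\delta>0$; boundedness of $a/x$ in $[0,1]$ upgrades this to full convergence.

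Once $a/x\to 0$, Karamata's theorem applied to $\varphi'(y)(1-\hat F(y))=s\,L(y)\,y^{s-1-\hat\beta}$ (whose index lies strictly below $-1$) gives
\[
I(x)\;\le\; \int_a^\infty \varphi'(y)(1-\hat F(y))\,dy \;\sim\;\frac{s}{\hat\beta-s}\,\varphi(a)\,T(x),
\]
which is $o(\varphi(x)T(x))$ because $\varphi(a)/\varphi(x)=(a/x)^s\to 0$. Dividing ($\ast$) by $\varphi(x)T(x)$ thus yields $\delta/(\varphi(x)T(x))\to 1$, i.e.\ $T(x)\sim \delta x^{-s}$ and $\beta^*=s$. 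The step I expect to require the most care is the subsequential argument for $a(x)/x\to 0$, where the slow-variation comparison $L(a)/L(x)\to 1$ must hold uniformly along the relevant trajectories; this is handled by the uniform convergence theorem for slowly varying functions.
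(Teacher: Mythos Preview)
Your approach is correct and essentially matches the paper's: both use Lemma~\ref{lWorst} together with the slackness identity, integrate by parts, show $a(x)\to\infty$, and invoke Karamata to conclude $\varphi(x)T(x)\to\delta$. Your detour through proving $a(x)/x\to0$ is correct but unnecessary: once $a\to\infty$, both correction terms $\varphi(a)T(x)=a^{s-\hat\beta}L(a)$ and $I(x)\le\int_a^\infty s\,y^{s-1-\hat\beta}L(y)\,dy$ already tend to $0$ \emph{absolutely} (since $s<\hat\beta$), which is the route the paper takes and which bypasses the subsequential argument entirely.
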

\begin{proof}
As $\hat{P}\in\mathcal{R}_{-\hat{\beta}}$, the corresponding distribution can be written as
\[
\hat{F}(y)= 1-y^{-\hat{\beta}}L(y),
\]
for some slowly varying function $L\in\mathcal{R}_0$. Since $s<\hat{\beta}$, then $\mathbb{E}_{\hat{P}}[ X^s]<\infty$. 

Lemma~\ref{lWorst} states that there exists some $x>0$ large enough such that $\lambda^*(x)>0$ and the worst-case distribution has support on 
\[
\big\lbrace y:\, \mathrm{d}(y,A)\leq 1/\lambda^*(x)\big\rbrace,
\]
where $A:=[x,\infty)\subset\mathbb{R}_+$ and $\mathrm{d}(y,A):=\inf\lbrace \mathrm{d}(y,z)\,: z\in A\rbrace$. Observe that $\mathrm{d}(y,A)=0$, if $y>x$ and $\mathrm{d}(y,A)=x^s-y^s$, otherwise. Hence, the support of $F_{\WD}^*$ is 
\[
\big[(x^s-1/\lambda^*(x))^{1/s}, \infty).
\]
To simplify the notation, we let $U(x):=x^s-\dfrac{1}{\lambda^*(x)}$. The worst-case distribution in $\mathcal{P}_{\WD}(\delta)$ for $\varphi(y)=y^s$ is given in terms of the reference distribution, i.e.,
\[
1-F_{\WD}^*(x)=\hat{P}\bigg(\bigg(x^s-\dfrac{1}{\lambda^*(x)}\bigg)^{1/s},\infty\bigg) = \hat{P}\big(U(x)^{1/s}, \infty\big)=U(x)^{-\hat{\beta}/s}L(U(x)^{1/s}),
\]
where $\lambda^*(x)$ solves \eqref{eqDual}. The asymptotic behavior of $F^*_{\WD}$ depends on the asymptotic behavior of $U(x)^{1/s}$, as $x\rightarrow\infty$. The slackness condition \eqref{eqCond2} is equivalent to
\begin{equation}\label{eqSlackness}
\delta = \int_{U(x)^{1/s}}^x (x^s-y^s)\,d\hat{F}(y).
\end{equation}
We claim that \eqref{eqSlackness} implies that $\lim_{x\rightarrow\infty}U(x)^{1/s}=\infty$. Assume by contradiction that there exists a sequence of positive numbers $(x_n; n\geq 1)$ with $x_n \to \infty$ as $n\to \infty$ such that $\liminf_{x_n\rightarrow\infty}U(x_n)^{1/s}=:C<\infty$. Hence there exists a sub-sequence $(x_{n_k}; n_k\geq 1)$ of positive numbers with $x_{n_k} \to \infty$ as $n_k\to \infty$ satisfying $C(1-\varepsilon)\leq U(x_{n_k})^{1/s}\leq C(1+\varepsilon)$, for some $\varepsilon>0$ and for all $n_k\geq n_0$. The slackness condition \eqref{eqSlackness} becomes
\begin{equation*}
\begin{split}
\delta & = \int_{U(x_{n_k})^{1/s}}^{x_{n_k}}(x_{n_k}^s-y^s)\,d\hat{F}(y) \\
& \geq \int_{C(1+\varepsilon)}^{x_{n_k}}(x_{n_k}^s-y^s)\,d\hat{F}(y) = x_{n_k}^s\big(\hat{F}(x_{n_k})-\hat{F}(C(1+\varepsilon))\big)-\mathbb{E}_{\hat{F}}[X^s\mathbf{1}_{\lbrace C(1+\varepsilon)\leq X\leq x_{n_k}\rbrace}]\\
& \geq x_{n_k}^s\big(\hat{F}(x_{n_k})-\hat{F}(C(1+\varepsilon))\big)-\mathbb{E}_{\hat{F}}[X^s].
\end{split}
\end{equation*}
 As $X$ has finite $s$-th moment and $x_{n_k}^s(\hat{F}(x_{n_k})-\hat{F}(C(1+\varepsilon)))\rightarrow\infty$ for $n_k\rightarrow\infty$, we get 
\[
\delta \geq \lim_{n_k\rightarrow\infty}x_{n_k}^s(\hat{F}(x_{n_k})-\hat{F}(C(1+\varepsilon)))-\mathbb{E}_{\hat{F}}[X^s] = \infty,
\]
which contradicts $\delta<\infty$.

The above analysis implies that $U(x)^{1/s}\rightarrow \infty$, as $x\rightarrow\infty$, if \eqref{eqSlackness} holds. Then we have
\begin{equation*}
\begin{split}
\delta & = \int_{U(x)^{1/s}}^x (x^s-y^s)\,d\hat{F}(y) = \int_{U(x)^{1/s}}^x x^s\,d\hat{F}(y) - \int_{U(x)^{1/s}}^x y^s\,d\hat{F}(y) \\
& = x^s\hat{F}(y)\Big\vert_{U(x)^{1/s}}^x - y^s\hat{F}(y)\Big\vert_{U(x)^{1/s}}^x + \int_{U(x)^{1/s}}^x sy^{s-1}\hat{F}(y)dy \\
& = x^s\big(1-y^{-\hat{\beta}}L(y)\big)\Big\vert_{U(x)^{1/s}}^x - y^s\big(1-y^{-\hat{\beta}}L(y)\big)\Big\vert_{U(x)^{1/s}}^x + s\int_{U(x)^{1/s}}^x y^{s-1}\big(1-y^{-\hat{\beta}}L(y)\big)dy \\
& = (x^s - U(x))U(x)^{-\hat{\beta}/s}L(U(x)^{1/s})-s\int_{U(x)^{1/s}}^\infty y^{s-\hat{\beta}-1}L(y)dy  + s\int_x^\infty y^{s-\hat{\beta}-1}L(y)dy 
\end{split}
\end{equation*}

Since $s-\hat{\beta}<0$, the Karamata theorem yields 
\[
\int_x^\infty y^{s-\hat{\beta}-1}L(y)dy\sim -\dfrac{1}{s-\hat{\beta}}x^{s-\hat{\beta}}L(x),\quad \text{ as } x\rightarrow\infty.
\]
We then obtain
\begin{align} \label{eqLimit}
\delta &\sim x^sU(x)^{-\hat{\beta}/s}L(U(x)^{1/s})-U(x)^{1-\hat{\beta}/s}L(U(x)^{1/s})+\dfrac{s}{s-\hat{\beta}}U(x)^{1-\hat{\beta}/s}L(U(x)^{1/s}) \nonumber\\
& \qquad -\dfrac{s}{s-\hat{\beta}}x^{s-\hat{\beta}}L(x) \nonumber  \\
& \sim  -\dfrac{s}{s-\hat{\beta}}x^{s-\hat{\beta}}L(x) + x^sU(x)^{-\hat{\beta}/s}L(U(x)^{1/s}) +\dfrac{\hat{\beta}}{s-\hat{\beta}}U(x)^{1-\hat{\beta}/s}L(U(x)^{1/s}).
\end{align}
As $s-\hat{\beta}<0$ and $\lim_{x\rightarrow\infty}U(x)=\infty$, the first and last terms in \eqref{eqLimit} converge to $0$, as $x\rightarrow\infty$. It follows that the dominant term is the middle one and, as $\lim_{x\rightarrow\infty}x^s=\infty$ for $s\geq 1$, it implies that 
\begin{equation*}
1-F_{\WD}^*(x)= U(x)^{-\hat{\beta}/s}L(U(x)^{1/s})\sim {\delta} x^{-s},\quad x\rightarrow\infty.
\end{equation*}
Hence the worst-case tail distribution decays as $x^{-s}$, while the ambiguity radius $\delta$ is absorbed as a scaling parameter. 
\end{proof}

Proposition~\ref{pPower} shows that the tail index of the worst-case distributions in a Wasserstein ball does not depend on the tail index of the reference distribution. In the extreme case, when $s=1$, the worst-case model in $\mathcal{P}_{\WD}(\delta)$ has a very heavy tail, independent of the reference tail index $\hat{\beta}$. The reason for this surprising behavior is that one can find two probability measures lying in a Wasserstein ball of any small radius $\delta$, but whose tails have very different decay. 

%===========================================================================================================

\subsection{Robust tail bounds for $f$-divergence}\label{sDivergence}

Another common choice for discrepancy $\mathcal{D}$ in a robust optimization context is the class of $f$-divergences, including for instance the $\KL$-divergence and the Hellinger divergence. There is significant amount of literature that uses these discrepancies to quantify the implication of model error in financial and actuarial risk measurements \citep[e.g.,][]{DupuisJamesPetersen2000, HansenSargent2001, BenTaletal2013,GlassermanXu2014}. In particular, \cite{BreuerCsiszar2016} and \cite{CsiszarBreur2018} evaluated the maximum expected loss of a portfolio in an ambiguity set $\mathcal{P}$ constructed with respect to $f$- and Bregman divergence, and characterized the density of the worst-case distribution in $\mathcal{P}$. To the best of our knowledge, the impact of distributional uncertainty in the context of extreme value theory has been investigated only by \cite{BlanchetHeMurthy2020}, where the R{\'e}nyi divergence is chosen as a measure of discrepancy between distributions. 

In this section we study the same problem \eqref{optim} when the measure of discrepancy is the $f$-divergence. 

\begin{definition}\normalfont\label{def:fdiv}
Let $f:(0,\infty)\rightarrow \mathbb{R}$ be a convex function such that $f(1)=0$. For $P,Q \in \mathcal P(S)$, suppose that $P$ is absolutely continuous with respect to $Q$. The $f$-divergence between $P$ and $Q$ is 
\[
D_f(P\Vert Q)=\mathbb{E}_{Q}\bigg[f\bigg(\dfrac{dP}{dQ}\bigg)\bigg].
\]
\end{definition}
By convention, $f(0):=\lim_{y\rightarrow 0^+}f(y)\in(-\infty,\infty]$, which exists because of the convexity of $f$ \citep[see][Lemma 2.1]{LieseVajda2008}. The $*$-conjugate of $f$ is defined as 
\[
f^*(y)=yf\Big(\dfrac{1}{y}\Big), \, y\in(0,\infty),
\]
which is also a convex function and we define $f^*(0):=\lim_{y\rightarrow 0^+}f^*(y)\in(-\infty,\infty]$.
The $f$-divergence enjoys the following properties (see \cite{Csiszar1967}, \cite{Csiszar1974}):
\begin{enumerate}
\item Non-negativity: $D_f(P\Vert Q)\geq 0$ with equality if and only if $f$ is strictly convex at $1$ and $P=Q$.
\item Symmetry: $D_f(P\Vert Q)=D_{f^*}(Q\Vert P)$.
\item Joint convexity: for $\lambda\in [0,1]$ it holds
\[
D_f(\lambda P_1+ (1-\lambda)P_2\Vert \lambda Q_1 + (1-\lambda)Q_2)\leq \lambda D_f(P_1\Vert Q_1) + (1-\lambda)D_f(P_2\Vert Q_2).
\]
\item Monotonicity: $D_f(P_{\vert \mathcal{G}}\Vert Q_{\vert \mathcal{G}}) \leq D_f(P\Vert Q)$, for any sub-$\sigma$-algebra $\mathcal{G}\subseteq \mathcal{B}(S)$.
\item Range of values: $D_f(P\Vert Q)\leq f(0)+f^*(0)$.
\item Lower semi-continuous in the pair $(P, Q)$ in the weak topology: for $(P_n)_{n\geq 1}$ and $(Q_n)_{n\geq 1}$ sequences of distributions that weakly converge to $P$ and $Q$, respectively, it holds that
\[
\liminf_{n\rightarrow\infty} D_f(P_n\Vert Q_n)\geq D_f(P\Vert Q).
\]
\item For an arbitrary $b\in\mathbb{R}$, if the divergence $\tilde{f}$ is defined as $\tilde{f}(y):=f(y)+ b(y-1)$, for all $y\geq 0$, then $D_{\tilde{f}}(P\Vert Q)= D_f(P\Vert Q)$, for any pair $(P,Q)$, with $P\ll Q$.
\end{enumerate}

\begin{example}\label{exF}\normalfont For particular choices of the function $f$ one can recover known divergences:
\begin{enumerate}[(i)]
\item Kullback--Leibler divergence: $\KL (P\Vert Q)=\mathbb{E}_Q\bigg[\dfrac{dP}{dQ}\log\bigg(\dfrac{dP}{dQ}\bigg)\bigg]$ for $f(y)=y\log(y)$.
\item Jeffrey's divergence: $\text{Jeffrey}(P\Vert Q) = \mathbb{E}_Q\bigg[\bigg(\dfrac{dP}{dQ}-1\bigg)\log\bigg(\dfrac{dP}{dQ}\bigg)\bigg]$ for $f(y)=(y-1)\log(y)$.  
\item Hellinger divergence of order $\alpha\in (1,\infty)$: $\mathcal{H}_\alpha(P\Vert Q)=D_f(P\Vert Q)$ for $f(y)=\dfrac{y^\alpha-1}{\alpha-1}$. 
\item $\chi^2$-divergence is the Hellinger divergence of order $2$, i.e., $\chi^2(P\Vert Q)=\mathbb{E}_Q\bigg[\dfrac{dP}{dQ}-1\bigg]^2$ for $f(y)=y^2-1$ or  $f(y)=(y-1)^2$.
\item Total variation distance: $\vert P-Q\vert = D_f(P\Vert Q)$ for $f(y)=\vert y-1\vert$.
\item Triangle discrimination: $\Delta(P\Vert Q)=D_f(P\Vert Q)$ for $f(y)=(y-1)^2/(y+1)$.
\item Jensen--Shannon divergence: $\JS (P\Vert Q)=D_f(P\Vert Q)$ for $f(y)=y\log(y)-(1+y)\log\Big(\dfrac{1+y}{2}\Big)$.
\end{enumerate}
\end{example}

We denote by $\mathcal{C}^1(0,\infty)$ the class of continuously differentiable functions on $(0,\infty)$. For the rest of this analysis, we impose the following assumption.

\begin{assumption}\label{aF1}
  Let $f\in \mathcal{C}^1(0,\infty)$ be strictly convex such that $f'(1)=0$ and $f\vert_{[1,\infty)}$ is a positive, increasing and regularly varying function of index $\rho\geq 1$. 
\end{assumption}

 The next remark shows that the above assumption is not very restrictive.

 \begin{remark}\label{rAssump}\normalfont
  Assumption~\ref{aF1} can be relaxed since different divergence functions may lead to the same $f$-divergence between $P$ and $Q$. Indeed, if $f\in \mathcal{C}^1(0,\infty)$ is a strictly convex divergence function that is regularly varying with index $\rho\geq 0$, then we may replace $f$ with $\tilde{f}:(0,\infty)\rightarrow \mathbb{R}$, $\tilde{f}(y):=f(y)-f'(1)(y-1)$. The function $\tilde f$ then satisfies Assumption~\ref{aF1} and induces the same $f$-divergence as $f$. For a proof see Appendix~\ref{appendixA}.
\end{remark}

Figure~\ref{fig:FandFtilde} shows the divergence functions $f$ from Examples~\ref{exF} (left-hand side) together with the transformation $\tilde{f}$ from Remark~\ref{rAssump} (right-hand side).

\begin{figure}[H]
  \begin{subfigure}{\linewidth}
  \includegraphics[width=.5\linewidth]{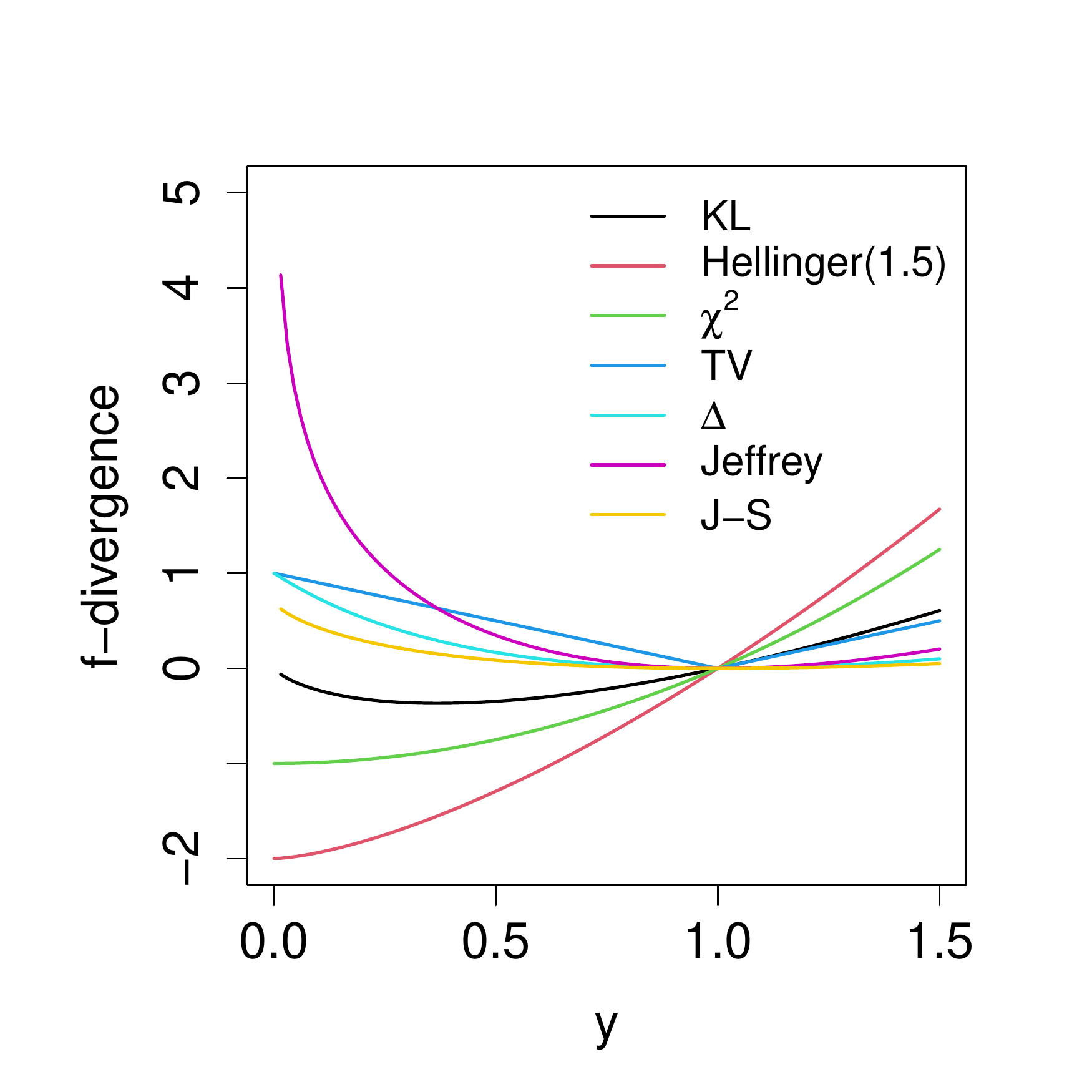}\hfill
  \includegraphics[width=.5\linewidth]{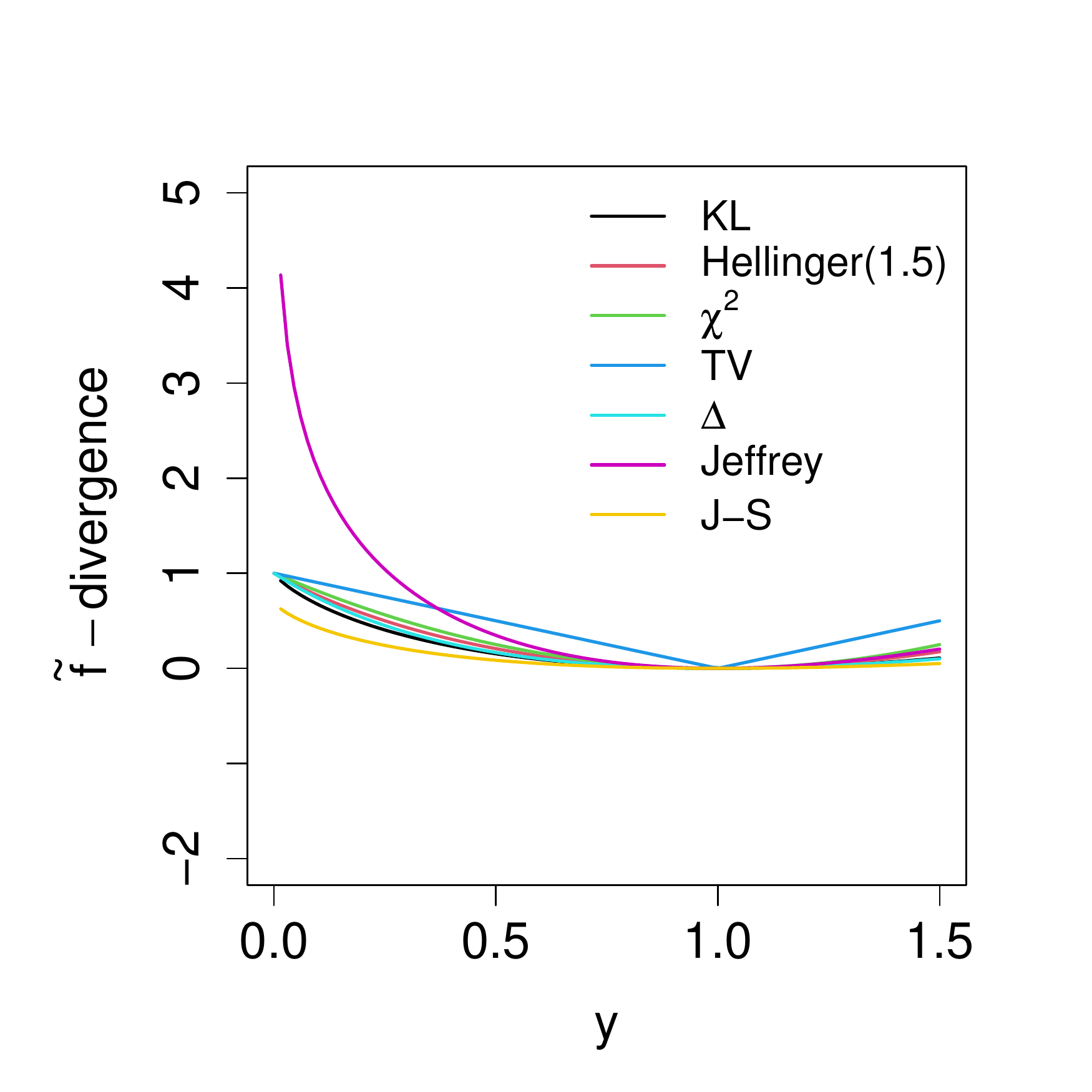}\hfill
  \end{subfigure}\vspace*{0.5cm}
  \caption{Left: divergence function $f(y)$. Right: the divergence function $\tilde{f}(y) = f(y)- f'(1)(y-1)$.}
\end{figure}\label{fig:FandFtilde}

Assumption~\ref{aF1} is satisfied by all $f$-divergences listed in Example \ref{exF}, except for the total variation distance. For example, the function $f$ corresponding to the Hellinger divergence of order $\alpha>1$ is strictly increasing and positive on $[1,\infty)$, and it is straightforward to see that $f\in\mathcal{R}_{\alpha}$.

%-----------------------------------------------------------------------------------------------------------------------------------------------------

In light of Definition~\ref{def:fdiv}, consider the optimization problem~\eqref{optim} when the neighborhood around $\hat{P}$ is the $f$-divergence ball:
\begin{equation}\label{eqWorst0}
\overline{F}^*_f(x):=\sup_{P \ll \hat P}\lbrace P\lbrace (x,\infty)\rbrace :\, D_f(P\Vert \hat{P})\leq\delta\rbrace.
\end{equation}
Observe that if $\delta> f(0)+f^*(0)\geq D_f(P\Vert \hat{P})$, then the $f$-divergence ambiguity set 
\begin{equation}\label{eqPfDelta}
\mathcal{P}_f(\delta):=\lbrace P\in\mathcal{P}(S) :\, D_f(P\Vert \hat{P})\leq \delta\rbrace
\end{equation}
becomes so large that it contains all distributions that are absolutely continuous with respect to $\hat P$. Hence, it is natural to consider a more restrictive neighborhood around $\hat{P}$ and impose the condition that $0<\delta<f(0)+f^*(0)$.

The lemma below shows that if $S$ is a compact Polish space, then the ambiguity set $\mathcal{P}_f(\delta)$ is compact with respect to the weak topology. 

\begin{lemma}\label{lFexistence}
Let $\hat{P}$ be a reference model and let $0<\delta<f(0)+f^*(0)$. If $S$ is a compact Polish space, then there exists some $P^*\in \mathcal{P}_f(\delta)$ that is optimal for Problem~\eqref{eqWorst0}. 
\end{lemma}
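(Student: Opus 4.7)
My plan is to apply the direct method in the calculus of variations, but parametrising the ambiguity set by densities rather than by measures, so as to side-step the fact that the indicator $\mathbf{1}_{(x,\infty)}$ is not continuous. Since $\delta<f(0)+f^*(0)$, every $P\in\mathcal{P}_f(\delta)$ must be absolutely continuous with respect to $\hat{P}$ (otherwise the $f$-divergence would pick up the singular contribution $f^*(0)P_s(S)$ and exceed $\delta$), so I would identify $\mathcal{P}_f(\delta)$ with the convex set of densities
\[
\mathcal{H}=\Big\lbrace h\in L^1(\hat{P}):\, h\geq 0,\ \int h\,d\hat{P}=1,\ \int f(h)\,d\hat{P}\leq\delta\Big\rbrace.
\]

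I would then pick a maximising sequence $h_n\in\mathcal{H}$ with $\int_{(x,\infty)}h_n\,d\hat{P}\to M:=\overline{F}^*_f(x)$ and extract a weakly convergent subsequence in $L^1(\hat{P})$. Boundedness is automatic since $\Vert h_n\Vert_{L^1}=1$, and uniform integrability follows from the constraint $\int f(h_n)\,d\hat{P}\leq\delta$ via the de la Vall\'ee--Poussin criterion as soon as $f(t)/t\to\infty$; under Assumption~\ref{aF1} this is immediate for $\rho>1$ by regular variation, and in the borderline case $\rho=1$ has to be checked by inspection of the slowly varying factor (for instance $L(t)=\log t$ in the KL case). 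Dunford--Pettis then produces a subsequence, still denoted $h_{n_k}$, converging weakly in $L^1(\hat{P})$ to some $h^*$.

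Next I would verify that $h^*$ belongs to $\mathcal{H}$, so that the measure $P^*$ defined by $dP^*/d\hat{P}=h^*$ lies in $\mathcal{P}_f(\delta)$. Non-negativity follows because $\lbrace h\geq 0\rbrace$ is convex and strongly closed in $L^1(\hat{P})$, hence weakly closed by Mazur's theorem; the normalisation $\int h^*\,d\hat{P}=1$ comes from testing weak convergence against $\mathbf{1}\in L^\infty(\hat{P})$; and the divergence bound $\int f(h^*)\,d\hat{P}\leq\delta$ follows because the convex integral $h\mapsto\int f(h)\,d\hat{P}$ is strongly lower semi-continuous on $L^1(\hat{P})$ by Fatou's lemma, hence weakly lower semi-continuous by Mazur (this is the $L^1$ counterpart of property~6 for $D_f$). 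Attainment is then immediate: the linear functional $h\mapsto\int h\,\mathbf{1}_{(x,\infty)}\,d\hat{P}$ is weakly continuous on $L^1(\hat{P})$ because $\mathbf{1}_{(x,\infty)}\in L^\infty(\hat{P})=L^1(\hat{P})^*$, so $P^*((x,\infty))=\lim_k\int h_{n_k}\,\mathbf{1}_{(x,\infty)}\,d\hat{P}=M$.

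The hard part is the uniform integrability step, which is where the assumptions on $f$ are really used; in the borderline $\rho=1$ regime one must argue case by case that the slowly varying factor grows to infinity. A seemingly more elementary route would be to use weak compactness of $\mathcal{P}(S)$ in the weak topology, which is where compactness of the Polish space $S$ enters via Prokhorov's theorem, combined with lower semi-continuity of $D_f$ from property~6. That route runs into a genuine obstacle, however: the functional $P\mapsto P((x,\infty))$ is only lower semi-continuous on $\mathcal{P}(S)$ because $(x,\infty)$ is open, and a weak limit $P^*$ could satisfy $P^*((x,\infty))<M\leq P^*([x,\infty))$ with the deficit $P^*(\lbrace x\rbrace)$ strictly positive whenever $\hat{P}$ places a point mass at $x$. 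The density-level argument above avoids this entirely because the bounded indicator is always a valid test function in $L^\infty(\hat{P})$.
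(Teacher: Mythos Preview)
Your approach differs from the paper's. The paper takes precisely the measure-level route you anticipate and criticise in your final paragraph: it argues that $\mathcal{P}_f(\delta)$ is weakly closed by lower semi-continuity of $D_f(\,\cdot\,\Vert\hat{P})$, hence weakly compact inside $\mathcal{P}(S)$ (using compactness of $S$), and then asserts that $P\mapsto P((x,\infty))$ is upper semi-continuous. Your objection to that last step is legitimate---$(x,\infty)$ is open, so the Portmanteau theorem only gives lower semi-continuity in general---and your density-level reformulation is a sensible way to bypass it, since $\mathbf{1}_{(x,\infty)}\in L^\infty(\hat{P})$ is a valid test function for weak $L^1$ convergence regardless of the topology of the set.

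There is, however, a genuine gap in your uniform-integrability step. De la Vall\'ee--Poussin requires $f(t)/t\to\infty$, i.e.\ $f^*(0)=\infty$, and you concede that for $\rho=1$ this ``has to be checked by inspection of the slowly varying factor''. But for two of the divergences the paper explicitly treats---the triangle discrimination ($f^*(0)=1$) and the Jensen--Shannon divergence ($f^*(0)=\log 2$)---the slowly varying factor tends to a finite positive constant, so the criterion fails outright and no amount of inspection will rescue it. In that regime a sequence in $\mathcal{H}$ can concentrate $L^1$-mass on shrinking sets without violating the divergence bound, so Dunford--Pettis does not apply and your compactness argument collapses. (Your opening remark that $\delta<f(0)+f^*(0)$ forces $P\ll\hat{P}$ is likewise only valid when $f^*(0)=\infty$; here the absolute continuity is simply built into the definition of the ambiguity set.) A clean repair, which also sidesteps the semi-continuity issue, is to observe that by conditional Jensen applied to the $\sigma$-algebra generated by $\{X\le x\}$ one may restrict the supremum in \eqref{eqWorst0} to two-valued densities $a\mathbf{1}_{\{X\le x\}}+b\mathbf{1}_{\{X>x\}}$; the feasible set is then a compact subset of $\mathbb{R}^2$ and existence is elementary for every $f$ under Assumption~\ref{aF1}.
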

\begin{proof}
For $\delta\in (0, f(0)+f^*(0))$, we claim that the ambiguity set $\mathcal{P}_f(\delta)$ in \eqref{eqPfDelta} is compact in the topology of weak convergence. To see this, let $(P_n)_{n\geq 1}$ be a sequence of distributions in $\mathcal{P}_f(\delta)$ that converges weakly to a probability distribution $P$ on $(S,\mathcal{B}(S))$. As $D_f(\cdot\Vert \hat{P})$ is lower semi-continuous, it follows that $\mathcal{P}_f(\delta)$ is a closed set \cite[Proposition 1.7]{Cioranescu2012} and hence $P\in \mathcal{P}_f(\delta)$. Since $\mathcal{P}(S)$ is weak$^*$ compact and $\mathcal{P}_f(\delta)$ is a closed subset in $\mathcal{P}(S)$, it follows that $\mathcal{P}_f(\delta)$ is compact. Since the mapping $P \mapsto P(x,\infty)$ is upper semi-continuous, it follows that there exists an optimizer $P^* \in \mathcal{P}_f(\delta)$ for Problem~\eqref{eqWorst0} \citep[see][Theorem B.2]{Puterman2014}.
\end{proof}

\begin{remark}\normalfont
In the absence of compactness of $S$, the compactness of $\mathcal{P}_f(\delta)$ can still be proven for particular choices of divergence (see \cite{VanErvenHarremos2014} for the case of R{\'e}nyi or Kullback--Leibler divergences). 
\end{remark} 

Throughout the rest of the section, we consider the special case of the compactified positive real line $S=[0,\infty]$. According to Lemma~\ref{lFexistence}, we thus guarantee that the supremum in Problem~\eqref{eqWorst0} is attained.

To obtain a solution of the Problem~\eqref{eqWorst0}, let us denote the Radon--Nikodym derivative by $L:= dP/d\hat{P}$ and \eqref{eqWorst0} becomes
\begin{equation}\label{eqWorst}
\overline{F}^*_f(x):=\sup_{L\geq 0}\lbrace\mathbb{E}_{\hat{P}}[L\mathbf{1}_{\lbrace X>x\rbrace}]: \mathbb{E}_{\hat{P}}[f(L)]\leq\delta, \mathbb{E}_{\hat{P}}[L]=1\rbrace.
\end{equation}
The next result presents the solution of the optimization problem \eqref{eqWorst}. The proof follows a similar strategy as in \cite{HuHong2013} that treat the Kullback--Leibler divergence case, or \cite{EngelkeIvanovs2017} for the $L^2$-distance.

\begin{proposition}\label{POptim}
  Let $\hat{P}$ be a reference model and let $x \geq 0$ such that $\hat{P}(x,\infty)>0$. Let $f:(0,\infty)\rightarrow\mathbb{R}$ be a divergence satisfying Assumption~\ref{aF1}. Then, for $\delta\in (0,f(0)+f^*(0))$, $L^*\geq 0$ is the optimizer of Problem~\eqref{eqWorst} if and only if $\mathbb{E}_{\hat{P}}[L^*]=1$ and at least one of the following cases holds:
  \begin{thmcases}
  \item \label{itm:firstCase} there exists some $\lambda_1>0$ and $\lambda_2\in\mathbb{R}$ such that $L^*$ solves $\mathbf{1}_{\lbrace{X>x}\rbrace}+\lambda_2-\lambda_1f'(L^*)=0$, $\hat{P}$-a.s.~and $\mathbb{E}_{\hat{P}}[f(L^*)]=\delta$. 
  \item \label{itm:secondCase} $\hat P(L^*=0, X \leq x) = 1$ and $\mathbb{E}_{\hat{P}}[f(L^*)]\leq \delta$. 
  \end{thmcases}
  \end{proposition}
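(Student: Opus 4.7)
The plan is to recognise Problem~\eqref{eqWorst} as a concave maximisation problem (linear objective, convex feasible set thanks to convexity of $f$) and apply Lagrangian duality. Slater's condition holds at the reference $L\equiv 1$ since $\mathbb{E}_{\hat P}[f(1)]=0<\delta$, producing multipliers $\lambda_1\geq 0$ (for the inequality constraint) and $\lambda_2\in\mathbb{R}$ (for the equality), with complementary slackness $\lambda_1(\mathbb{E}_{\hat P}[f(L^*)]-\delta)=0$. The primal-dual optimality condition is that $L^*$ is a pointwise maximiser of $L\mapsto Lc(\omega)+\lambda_2 L-\lambda_1 f(L)$ on $[0,\infty)$, where $c(\omega)=\mathbf{1}_{\{X(\omega)>x\}}\in\{0,1\}$.

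For sufficiency I would use the supporting-line inequality $f(L)\geq f(L^*)+f'(L^*)(L-L^*)$. In Case~\ref{itm:firstCase}, integrating under $\hat P$ and using $\mathbb{E}_{\hat P}[f(L)]\leq\delta=\mathbb{E}_{\hat P}[f(L^*)]$ gives $\mathbb{E}_{\hat P}[f'(L^*)(L-L^*)]\leq 0$. Substituting $\lambda_1 f'(L^*)=\mathbf{1}_{\{X>x\}}+\lambda_2$ and cancelling the constant via $\mathbb{E}_{\hat P}[L]=\mathbb{E}_{\hat P}[L^*]=1$ yields $\mathbb{E}_{\hat P}[L\mathbf{1}_{\{X>x\}}]\leq\mathbb{E}_{\hat P}[L^*\mathbf{1}_{\{X>x\}}]$, which is optimality. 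In Case~\ref{itm:secondCase}, the trivial bound $\mathbb{E}_{\hat P}[L\mathbf{1}_{\{X>x\}}]\leq\mathbb{E}_{\hat P}[L]=1$ is attained by $L^*$, since $L^*=0$ on $\{X\leq x\}$ combined with $\mathbb{E}_{\hat P}[L^*]=1$ forces $\mathbb{E}_{\hat P}[L^*\mathbf{1}_{\{X>x\}}]=1$.

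For necessity I would split on the value of $\lambda_1$. If $\lambda_1=0$, the Lagrangian is linear in $L$, and finiteness of its supremum over $L\geq 0$ combined with $\mathbb{E}_{\hat P}[L^*]=1$ forces $\lambda_2=-1$ and $L^*=0$ on $\{X\leq x\}$, which is Case~\ref{itm:secondCase}. If $\lambda_1>0$ and the pointwise maximiser is interior $\hat P$-almost surely, strict convexity of $f$ gives the first-order condition in Case~\ref{itm:firstCase}, with equality in the divergence constraint enforced by complementary slackness. The remaining possibility under $\lambda_1>0$ is that the pointwise maximiser touches the boundary $L=0$ on a set of positive measure; by inspection of the one-dimensional problem with $c\in\{0,1\}$ this can only happen on $\{X\leq x\}$, because $L^*=0$ on any portion of $\{X>x\}$ combined with the argmax structure would contradict optimality (the feasible $L\equiv 1$ delivers the strictly positive value $\hat P(X>x)$). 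This last scenario again falls under Case~\ref{itm:secondCase}.

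The main obstacle I anticipate is rigorously justifying strong Lagrangian duality in this infinite-dimensional setting—producing measurable multipliers and a jointly measurable pointwise maximiser, and controlling the boundary behaviour when $f'(0^+)=-\infty$ (as for $f(y)=y\log y$), where the first-order analysis degenerates. I would parallel the Kullback--Leibler argument of \cite{HuHong2013}, relying on the regularity granted by Assumption~\ref{aF1} to interchange supremum and integral and to invert $f'$ on the relevant range.
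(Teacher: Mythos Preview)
Your proposal is correct and follows essentially the same route as the paper: Lagrangian duality for the convex problem~\eqref{eqWorst}, Slater's condition at $L\equiv 1$, complementary slackness, and a case split on $\lambda_1=0$ versus $\lambda_1>0$. The only notable difference is that the paper derives the stationarity condition by computing the directional derivative $\nabla_V\mathcal{J}(L)$ and passing the limit inside the expectation via monotone convergence (using convexity of $f$), whereas you verify sufficiency directly with the supporting-line inequality; you are also slightly more careful than the paper in flagging the boundary sub-case $L^*=0$ under $\lambda_1>0$, which the paper's first-order argument implicitly absorbs.
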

  \begin{proof}
  The proof is based on the dual formulation of the Problem \eqref{eqWorst}. It is easy to see that \eqref{eqWorst} is a convex optimization problem with the corresponding Lagrange function defined as
  \[
  \mathcal{L}(L,\lambda_1,\lambda_2)= \mathbb{E}_{\hat{P}}[\mathbf{1}_{\lbrace{X>x}\rbrace}L] -\lambda_1(\mathbb{E}_{\hat{P}}[f(L)]-\delta) + \lambda_2(\mathbb{E}_{\hat{P}}[L]-1),
  \]
  for some $\lambda_1\geq 0$ and $\lambda_2\in\mathbb{R}$. Observe that for $\tilde{L}:=1$ $\hat{P}$-a.s., $\mathbb{E}_{\hat{P}}[f(\tilde{L})]=0<\delta$ and $\mathbb{E}_{\hat{P}}[\tilde{L}]=1$; the Slater's condition implies the strong duality \citep[see][Section 5.2.3]{BoydVandenberghe2004} and thus $L^*$ is the optimal solution of \eqref{eqWorst} if and only if there exist some $\lambda_1\geq 0$ and $\lambda_2\in\mathbb{R}$ and $L^*$ is the optimal solution of 
  \begin{equation}
  \sup_{L\geq 0}\mathcal{J}(L):= \sup_{L\geq 0}\mathbb{E}_{\hat{P}}[(\mathbf{1}_{\lbrace{X>x}\rbrace}+\lambda_2)L-\lambda_1 f(L)],\label{eqInner}
  \end{equation}
  and in addition to the constraints from Problem~\eqref{eqWorst}, the complementary slackness conditions 
  \[
  \lambda_2(\mathbb{E}_{\hat{P}}[L^*]-1)=0\text{  and }\lambda_1(\mathbb{E}_{\hat{P}}[f(L^*)]-\delta)=0
  \]
  must hold. Problem \eqref{eqInner} is concave in $L$, since $f$ is a convex function. The directional derivative of $\mathcal{J}(L)$ at $L$ in the direction of $V:[0,\infty]\rightarrow [0,\infty)$ is:
  \begin{equation*}
    \nabla_V\mathcal{J}(L) = \lim_{t\rightarrow 0}\dfrac{\mathcal{J}(L+ tV)-\mathcal{J}(L)}{t} = \mathbb{E}_{\hat{P}}[(\mathbf{1}_{\lbrace{X>x}\rbrace}+\lambda_2)V]-\lambda_1\lim_{t\rightarrow 0}\dfrac{\mathbb{E}_{\hat{P}}[f(L+tV)-f(L)]}{t}.
  \end{equation*}
  Note that the function $f(y)$ is convex on $(0,\infty)$ and hence, for any $y$ and direction $v\geq 0$, the function $[f(y+tv)-f(y)]/t$ is monotonic in $t$. To see this, let $0<t_1<t_2$; for any direction $v$, we have: 
  \[
  f(y+t_1v) = f\bigg[\dfrac{t_1}{t_2}(y+t_2v)+\bigg(1-\dfrac{t_1}{t_2}\bigg)y\bigg]\leq \dfrac{t_1}{t_2}f(y+t_2v)+\bigg(1-\dfrac{t_1}{t_2}\bigg)f(y), 
  \]
  where the last inequality uses the convexity of $f$. Multiplying both sides by $1/t_1>0$ and rearranging the terms yields $[f(y+t_1v)-f(y)]/t_1 \leq [f(y+tv_2)-f(y)]/t_2$ and hence $[f(y+tv)-f(y)]/t$ is monotonic in $t$. The monotone convergence theorem yields
  \begin{equation*}
  \begin{split}
  \nabla_V\mathcal{J}(L) & = \mathbb{E}_{\hat{P}}[(\mathbf{1}_{\lbrace{X>x}\rbrace}+\lambda_2)V]-\lambda_1\mathbb{E}_{\hat{P}}\bigg[\lim_{t\rightarrow 0}\dfrac{f(L+tV)-f(L)}{t}\bigg] \\
  & = \mathbb{E}_{\hat{P}}[(\mathbf{1}_{\lbrace{X>x}\rbrace}+\lambda_2-\lambda_1 f'(L))V].
  \end{split}
  \end{equation*}
  The optimal $L^*$ satisfies $\nabla_V\mathcal{J}(L^*) =0$, for all directions $V\geq 0$, and consequently, $L^*$ is optimal if and only if it solves the equation $\mathbf{1}_{\lbrace{X>x}\rbrace}+\lambda_2-\lambda_1f'(L^*)=0$ $\hat P$-a.s. In the case $\lambda_1>0$ the complementary slackness conditions state that $\mathbb{E}_{\hat{P}}[f(L^*)]=\delta$.

  If $\lambda_1=0$, then Problem~\eqref{eqInner} becomes linear in $L$. Again, for all $V\geq 0 $, it holds $\mathbb{E}_{\hat{P}}[(\mathbf{1}_{\lbrace{X>x\rbrace}}+\lambda_2)V]=0$, i.e., $\hat{P}(L^* > 0, \mathbf{1}_{\lbrace{X>x\rbrace}}+\lambda_2 =  0) = 1$. Since the indicator function takes only two values, it implies that $\lambda_2\in\lbrace{0, -1\rbrace}$. 
  
  If $\lambda_2=0$ then we must have $\mathbf{1}_{\lbrace{X>x}\rbrace}=0$ $\hat P$-a.s., which contradicts the assumption.
  
  If $\lambda_2=-1$ then $\hat{P}(L^* > 0, X > x)=1$, and the corresponding optimal value is $\mathbb{E}_{\hat{P}}[L^*\mathbf{1}_{{\lbrace X>x\rbrace}}]=\mathbb{E}_{\hat{P}}[L^*]=1$.
\end{proof}
  
The next proposition provides a characterization of the decay of $1-F^*_f(x)$, in terms of the reference distribution $1-\hat{F}(x)$ and the tolerance level $\delta$.

\begin{proposition}\label{Pfdivergence}
  Let $\hat{P}$ be a reference model and let $x_0\geq 0$ such that $p_x:=\hat{P}(x,\infty)>0$ for all $x \geq x_0$ and $\lim_{x\to \infty} p_x = 0$. Let $f:(0,\infty)\rightarrow\mathbb{R}$ be a divergence satisfying Assumption~\ref{aF1}. Consider the ambiguity set given by 
  \[
  \mathcal{P}_f(\delta):=\lbrace P\in\mathcal{P}(S) :\, D_f(P\Vert \hat{P})\leq \delta\rbrace,
  \]
  for a tolerance level $\delta < f^*(0) + f(0)$. The worst-case tail distribution in \eqref{optim} for $\mathcal{D}$ being the $f$-divergence is then of the form
  \begin{equation*}
    \left\{
      \begin{aligned}
        & \overline{F}^*_f(x) \sim  f^\leftarrow(\delta/p_x)p_x,\quad  \mbox{if } f^*(0) = \infty,\\
        & \overline{F}^*_f(x) \sim \ell \in (0,1], \quad \mbox{if }  f^*(0) \in (0, \infty),\\
      \end{aligned}
    \right.
  \end{equation*} 
  where in the second case $\ell$ is the unique root of the equation $\ell f^*(0) - f(1-\ell)=\delta$, and $f^\leftarrow (y):=\inf \lbrace z\geq 1:\, f(z)\geq y\rbrace$.
\end{proposition}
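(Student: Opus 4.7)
The plan is to combine Proposition~\ref{POptim} with a one-dimensional implicit-function analysis of the normalization and divergence constraints. First I would rule out Case~\ref{itm:secondCase} of Proposition~\ref{POptim} for large $x$: under that case, normalization forces $\mathbb{E}_{\hat P}[L^* \mathbf{1}_{\{X>x\}}] = 1$, and Jensen's inequality applied to the convex $f$ gives $\mathbb{E}_{\hat P}[f(L^*)] \geq p_x f(1/p_x) + (1-p_x) f(0)$, which converges to $f^*(0) + f(0) > \delta$ as $p_x \to 0$. Hence for $x$ large enough only Case~\ref{itm:firstCase} is active, the divergence constraint is binding, and strict convexity of $f$ together with the stationarity condition forces $L^*$ to take only two values, $L^*_+$ on $\{X>x\}$ and $L^*_- \leq L^*_+$ on $\{X\leq x\}$.

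Writing $u := L^*_+ p_x = \overline{F}^*_f(x)$ and eliminating $L^*_-$ via $\mathbb{E}_{\hat P}[L^*]=1$, the binding divergence condition reduces to the single implicit equation
\[
\Psi(u, p_x) := p_x\, f(u/p_x) + (1-p_x)\, f\bigl((1-u)/(1-p_x)\bigr) = \delta.
\]
Monotonicity $\partial_u \Psi = f'(L^*_+) - f'(L^*_-) \geq 0$ ensures that this equation has a unique solution in $(p_x, 1)$, which is precisely $\overline{F}^*_f(x)$.

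The core of the proof is then the asymptotic analysis of this implicit equation as $p_x \to 0$. If $f^*(0) = \infty$, then any subsequential limit $\ell > 0$ of $u(p_x)$ would give $p_x f(u/p_x) = u \cdot f(u/p_x)/(u/p_x) \to \ell f^*(0) = \infty$, contradicting $\Psi = \delta$; hence $u \to 0$, $L^*_- \to 1$, the second term of $\Psi$ vanishes by continuity of $f$ at $1$, and $p_x f(u/p_x) \to \delta$. Since $f\vert_{[1,\infty)}$ is strictly increasing and regularly varying of index $\rho \geq 1$, its generalized inverse $f^\leftarrow$ is regularly varying of index $1/\rho$, and asymptotic inversion yields $u/p_x \sim f^\leftarrow(\delta/p_x)$, so $\overline{F}^*_f(x) \sim p_x f^\leftarrow(\delta/p_x)$. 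If instead $f^*(0) \in (0,\infty)$, passing to the limit in both terms of $\Psi$ (using $f(z)/z \to f^*(0)$ and continuity of $f$ at $1-\ell$) shows that any subsequential limit $\ell$ of $u(p_x)$ solves the algebraic equation in $\ell$, $f^*(0)$ and $f(1-\ell)$ stated in the proposition; strict monotonicity of the left-hand side in $\ell$ pins down a unique root $\ell \in (0,1]$, so in fact $u(p_x) \to \ell$.

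The main technical obstacle will be the asymptotic inversion in the first case: the relation $f(u/p_x) \sim \delta/p_x$ does not immediately yield $u/p_x \sim f^\leftarrow(\delta/p_x)$ without first showing $u/p_x \to \infty$ (so that eventually $u/p_x$ lies in the range where $f$ is strictly increasing) and then sandwiching $u/p_x$ via the uniform convergence theorem for regularly varying functions applied to $f\bigl((1\pm\varepsilon) f^\leftarrow(\delta/p_x)\bigr)$.
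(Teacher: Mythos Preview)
Your proposal is correct and follows essentially the same route as the paper's proof: reduce via Proposition~\ref{POptim} to the two-valued density $L^* = a_x \mathbf{1}_{\{X\le x\}} + b_x \mathbf{1}_{\{X>x\}}$, obtain the implicit equation $(1-p_x)f\bigl((1-p_xb_x)/(1-p_x)\bigr) + p_x f(b_x) = \delta$, and analyse $p_x b_x$ as $p_x \to 0$ according to whether $f^*(0)$ is infinite or finite. Two minor presentational differences: (i) you rule out Case~\ref{itm:secondCase} by Jensen's inequality, whereas the paper achieves the same via the sign of $T(1/p_x)$; (ii) for the inversion step $p_x f(b_x)\to\delta \Rightarrow b_x \sim f^\leftarrow(\delta/p_x)$ the paper invokes the asymptotic-equivalence preservation of regularly varying functions (Djur\v{c}i\'c), while you propose a direct sandwich via the uniform convergence theorem---both are standard and equivalent here. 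One point to make explicit in your write-up: in the case $f^*(0)\in(0,\infty)$ you should separately exclude the accumulation point $\ell=0$ (e.g.\ via the bound $p_x f(u/p_x)\le u\sup_{z\ge 1}f(z)/z\to 0$), since your limit identification of the first term as $\ell f^*(0)$ presupposes $u/p_x\to\infty$, which is not automatic when $\ell=0$.
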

\begin{proof}
  According to Problem~\eqref{eqWorst}, the worst-case distribution is given by $\overline{F}^*_f(x)=\mathbb{E}_{\hat{P}}[L^*\mathbf{1}_{\lbrace X>x\rbrace}]$, where the optimal $L^*$ is one of the form in Proposition~\ref{POptim}. 
  
  In \ref{itm:firstCase} of Proposition~\ref{POptim}, to find a closed-form solution $L^*$, it is sufficient to determine constants $\lambda_1> 0$ and $\lambda_2\in\mathbb{R}$ such that $(\mathbf{1}_{\lbrace{X>x}\rbrace}+\lambda_2)/\lambda_1\in\text{im}(f')$, $\hat{P}$-a.s., i.e.,
  \begin{equation*}
    \begin{split}
    L^*  & =(f')^{-1}\bigg(\dfrac{\mathbf{1}_{\lbrace X>x\rbrace }+\lambda_2}{\lambda_1}\bigg)= (f')^{-1}\bigg(\dfrac{\lambda_2}{\lambda_1}\bigg)\mathbf{1}_{\lbrace X<x\rbrace}+(f')^{-1}\bigg(\dfrac{\lambda_2+1}{\lambda_1}\bigg)\mathbf{1}_{\lbrace X>x\rbrace} \\
    & = a_x\mathbf{1}_{\lbrace X<x\rbrace }+ b_x\mathbf{1}_{\lbrace X>x\rbrace},
    \end{split}
  \end{equation*}
  for $a_x:=\lambda_2\lambda_1^{-1}\in\mathbb{R}$ and $b_x := (\lambda_2+1)\lambda_1^{-1}\in\mathbb{R}_+$. As $\mathbb{E}_{\hat{P}}[f(L^*)]=\delta$, it holds that 
  \[ 
  \delta =f(a_x)\hat{P}(X\leq x)+ f(b_x)\hat{P}(X>x).
  \]
  Since $\mathbb{E}_{\hat{P}}[L^*]=1$, we also have 
  \begin{equation}\label{eqAxBx}
  a_x\hat{P}(X<x)+b_x\hat{P}(X>x)=1. 
  \end{equation}
  Let $p_x:=\hat{P}(x,\infty)$. If there exists some $b_x\in (1,1/p_x)$ such that 
  \begin{equation}\label{Fequation}
  (1-p_x)f\bigg(\dfrac{1-p_xb_x}{1-p_x}\bigg) + p_xf(b_x)=\delta,
  \end{equation}
  then the optimal value of \eqref{eqWorst} is of the form $\overline{F}^*_f(x)=\mathbb{E}_{\hat{P}}[L^*\mathbf{1}_{\lbrace X>x\rbrace}]=\mathbb{E}_{\hat{P}}[b_x\mathbf{1}_{\lbrace X>x\rbrace}]=b_xp_x$. To see this, define the function  $T:[1,1/p_x]\rightarrow\mathbb{R}$, $T(y):=p_xf(y)+(1-p_x)f\Big(\dfrac{1-yp_x}{1-p_x}\Big)-\delta$, continuous and increasing in $y$. Since $f(1)=0$ then $T(1)=p_xf(1)+(1-p_x)f(1)-\delta=-\delta<0$. 

  Since $f^*(p_x)+(1-p_x)f(0) \to f^*(0) + f(0) < \delta$ as $x\to\infty$, we can choose $x_0$ large enough such that for all $x \geq x_0$ we have $f^*(p_x)+(1-p_x)f(0) < \delta$. Then $T(1/p_x)>0$ and hence there exists some $b_x\in (1,1/p_x)$ such that $T(b_x)=0$. By definition, we have $b_x=(f')^{-1}\big((\lambda_2+1)/\lambda_1\big)$, and from \eqref{eqAxBx}, we also get $a_x=(1-p_xb_x)(1-p_x)^{-1}$. To determine $\lambda_1>0$ and $\lambda_2\in\mathbb{R}$, it suffices to solve the linear system of equations:
  \begin{equation*}
    \left\{
      \begin{aligned}
        & \lambda_2 - f'(b_x)\lambda_1  = -1\\
        & \lambda_2 - f'(a_x)\lambda_1  =  0.
      \end{aligned}
    \right.
  \end{equation*} 
  By solving the system, we obtain $\lambda_1=(f'(b_x)-f'(a_x))^{-1}$ and $\lambda_2=(f'(b_x)-f'(a_x))^{-1}f'(a_x)$. Observe that $b_x>a_x$ and since $f'$ is strictly increasing, then $\lambda_1>0$. 

  If $\lim_{y\rightarrow\infty}f(y)/y= f^*(0)=\infty$, then according to Lemma~\ref{lPxbx} in Appendix~\ref{appendixB}, $p_xb_x\rightarrow 0$, $x\rightarrow\infty$. We thus obtain
  \[
    \lim_{x\rightarrow\infty}(1-p_x)f\Big(\dfrac{1-p_xb_x}{1-p_x}\Big)=f(1) = 0.
  \]

  From \eqref{Fequation}, one gets $p_xf(b_x)=\delta-\varepsilon(x)$ for some $\varepsilon(x)\rightarrow 0$ as $x\rightarrow\infty$. As $f:[1,\infty)\rightarrow [0,\infty)$ is regularly varying with index $\rho\geq 1$, then there exists $f^\leftarrow :[0,\infty)\rightarrow [1,\infty)$, $f^\leftarrow (y)=\inf \lbrace y\geq 1:\, f(z)\geq y\rbrace$ such that $f^\leftarrow\in\mathcal{R}_{1/\rho}$ and 
  \[
    f(f^\leftarrow (y))\sim f^\leftarrow (f(y))\sim y,  \quad \text{as } y\rightarrow\infty.
  \]
  Hence the worst-case tail distribution in a $f$-divergence ball is given by 
  \[  
    \overline{F}^*_f(x)=p_xb_x= f^\leftarrow\big(\{\delta-\varepsilon(x)\}p_x^{-1}\big)p_x.
  \] 
  As $p_x$ and $f^\leftarrow$ are right-continuous functions, then $F^*_f$ is right-continuous as well. Further,  $f^\leftarrow$ satisfies 
  \[
    \lim_{\substack{t\rightarrow\infty \\ \lambda\rightarrow 1}}f^\leftarrow (t\lambda)/f^\leftarrow (t)=1,
  \]
  and according to Lemma 2 in \cite{Djurvcic1998}, $f^\leftarrow$ preserves the asymptotic equivalence relation, i.e., as 
  \begin{equation}\label{eqAsymptotic}
  \{\delta-\varepsilon(x)\}p_x^{-1}\sim \delta p_x^{-1}\Rightarrow f^\leftarrow\big(\{\delta-\varepsilon(x)\}p_x^{-1} \big)\sim f^\leftarrow(\delta p_x^{-1}),\quad x\rightarrow\infty.
  \end{equation}
  Then the worst-case tail $\overline{F}^*_f(x)$ is asymptotically equivalent to $f^\leftarrow\big(\delta p_x^{-1}\big)p_x$.

  If $\lim_{y\rightarrow\infty}f(y)/y= f^*(0) \in (0,\infty)$, then from Appendix~\ref{appendixB}, $\overline{F}^*_f(x)= p_xb_x\sim \ell$, where $\ell\in(0,1]$ uniquely solves the equation $\ell f^*(0) -f(1-\ell)=\delta$.
\end{proof}

For specific choices of $f$-divergence, Proposition~\ref{Pfdivergence} describes the tail behavior of the worst-case distribution in the corresponding ambiguity set. 

\begin{example}\label{exDivergence}\normalfont
  Let the reference model $\hat{P}$ be a generalized extreme value distribution with tail index $\hat{\beta}>0$, given by 
  \[
  \hat{F}(x)=\exp\bigg\lbrace-\bigg(1+\dfrac{x-\hat \mu}{\hat{\beta}\hat{\sigma}}\bigg)^{-\hat{\beta}}\bigg\rbrace,
  \]
  with scale $\hat{\sigma}>0$ and location $\hat \mu>0$. The tail of the reference model $p_x=1-\hat{F}(x)$ behaves like
  \[ 
    p_x\approx \bigg(\dfrac{x}{\hat{\beta}\hat{\sigma}}\bigg)^{-\hat{\beta}}, \quad  x\rightarrow\infty.
  \]
  Proposition~\ref{Pfdivergence} provides the asymptotic tails of the worst-case distribution in a $f$-divergence neighborhood of radius $\delta < f^*(0)+ f(0)$. Table~\ref{table:examples} illustrates the tail index and the scale parameter of the worst-case distribution for different divergence functions $f$ that satisfy our assumptions. The detailed computations are provided in Appendix~\ref{appendixC}.
  
  \begin{table}[H]
  \centering 
  \begin{tabular}{|c|c|}
  \hline
  $f$-divergence & $\overline{F}^*_f(x) $ \\ \hhline{|=|=|}
  \begin{tabular}[c]{@{}c@{}}$\KL$\\ $f(y)=y\log(y)$\end{tabular} & $\delta\hat{\beta}^{-1}\log^{-1}(x)$ \\ \hline
  \begin{tabular}[c]{@{}c@{}}Hellinger\\ $f(y)=\dfrac{y^\alpha-1}{\alpha-1}$\end{tabular} & \begin{tabular}[c]{@{}c@{}}$\Big(\dfrac{x}{\beta^*\sigma^*}\Big)^{-\beta^*}$ \\ where $\beta^*=\dfrac{\alpha-1}{\alpha}\hat{\beta}$, $\sigma^*=\alpha(\alpha-1)^{\frac{1}{\hat{\beta}(\alpha-1)}-1}\delta^{\frac{1}{\hat{\beta}(\alpha-1)}}\hat{\sigma}$\end{tabular} \\ \hline
  \begin{tabular}[c]{@{}c@{}}$\chi^2$ \\ $f(y)=y^2-1$\end{tabular} & \begin{tabular}[c]{@{}c@{}}$\Big(\dfrac{x}{\beta^*\sigma^*}\Big)^{-\beta^*}$ \\ where $\beta^*=\hat{\beta}/2$, $\sigma^*=2\delta^{1/\hat{\beta}}\hat{\sigma}$ \end{tabular} \\ \hline
  \begin{tabular}[c]{@{}c@{}}$\Delta$ \\  $f(y)=\frac{(y - 1)^2}{y + 1}$\end{tabular} & \begin{tabular}[c]{@{}c@{}}$\dfrac{2\delta}{\delta + 2}$, \, $\delta\in (0,2)$ \\ \end{tabular} \\ \hline
  \begin{tabular}[c]{@{}c@{}} Jeffrey \\  $f(y)=(y-1) \log(y)$\end{tabular} & $\delta\hat{\beta}^{-1}\log^{-1}(x)$\\ \hline
  \begin{tabular}[c]{@{}c@{}} J-S \\  $f(y)=y\log(y) - (1+y) \log(\frac{1+y}{2})$\end{tabular} & \begin{tabular}[c]{@{}c@{}} $\ell$, where $\ell$ solves the equation \\
  $\ell \log(2) +   (1-\ell)\log(1-\ell)-(2-\ell)\log((2-\ell)/2)=\delta$, \\ 
  with $\delta \in (0, 2\log(2))$. \\ \end{tabular} \\ \hline
  \hhline{|=|=|}
  R{\'e}nyi &  \begin{tabular}[c]{@{}c@{}}$\Big(\dfrac{x}{\beta^*\sigma^*}\Big)^{-\beta^*}$ \\ where $\beta^*=\dfrac{\alpha-1}{\alpha}\hat{\beta}$, $\sigma^*=\dfrac{\alpha}{\alpha-1}(\exp((\alpha-1)\delta)-1)^{1/(\hat{\beta}(\alpha-1))}\hat{\sigma}$\end{tabular} \\ \hline
  \end{tabular}\vspace*{0.4cm}
  \caption{The tail behaviour of the worst-case distribution in $\mathcal{P}_f(\delta)$ for different $f$-divergences.}\label{table:examples}
  \end{table}
\end{example}

The form in Proposition~\ref{Pfdivergence} indicates that asymptotically, the tail behavior of the worst-case distribution in $\mathcal{P}_f(\delta)$ is influenced by the tail of the reference model. Moreover, Example~\ref{exDivergence} states a similar result as in \cite{BlanchetHeMurthy2020}: the qualitative information regarding the reference tail index is preserved under robustification when using $f$-divergence neighborhoods.

%===========================================================================================================

\section{Numerical examples}\label{Numerics}

In this section, we apply the bounds obtained in the previous sections to quantify the impact of model misspecification on the tail index. We illustrate our approach on a public data set of Danish insurance claims available in the \texttt{evir} package of the statistical software R.

This dataset represents a collection of large fire insurance claims $X_1, \dots, X_n$ in Denmark, from January 1980 until December 1990. It contains $n=2167$ observations, which can be considered as independent samples of the random variable $X$ representing claims over one million Danish Krone, in 1985 prices. The data was provided by Mette Rytgaard of Copenhagen Re (see \cite{Rytgaard1997}). A detailed analysis was performed in \cite{McNeil1997} with a focus on modelling the tail of the loss distribution. Figure~\ref{fig:analysis} shows an exploratory analysis of the data. In the left panel of this figure, the histogram  of log-transformed observations indicates a heavy-tailed distribution. The empirical mean residual life plot
$$\Big(u, \sum_{i=1}^{n}(X_i-u) 1\{ X_i > u \} /  \sum_{i=1}^{n}1\{ X_i > u \} \Big), \qquad u > 0,$$
is the empirical version of the conditional expectation $\mathbb E(X - u \mid X > u)$ as a function of the threshold $u$. If the distribution of the excesses of $X$ is a generalized Pareto distribution with scale $\sigma$ and shape $\xi$, then this plot should be concentrated around the line 
$$u \mapsto \frac{\sigma + \xi u}{1 - \xi},$$
\citep[e.g.,][Theroem 3.4.13]{EmbrechtsKluppelbergMikosch2013}. The right panel of Figure~\ref{fig:analysis} shows the mean excess plot for the fire insurance data. Above a threshold of $u=9.97$, which corresponds to the $95\%$ quantile, the plot is approximately linear and the approximation by a generalized Pareto distribution is justified.

\begin{figure}[H]
  \centering
  \includegraphics[width=.45\textwidth]{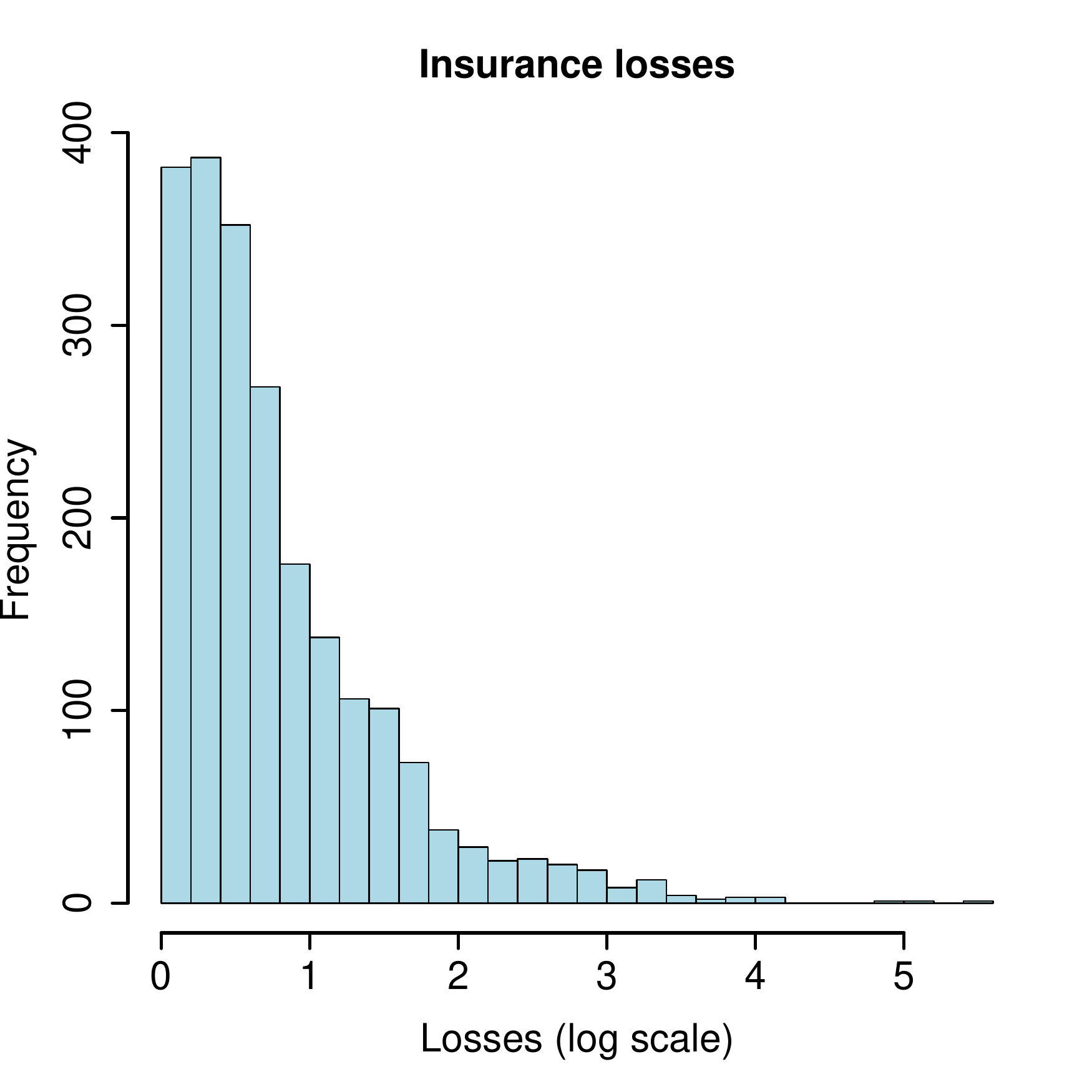} % 
  \includegraphics[width=.45\textwidth]{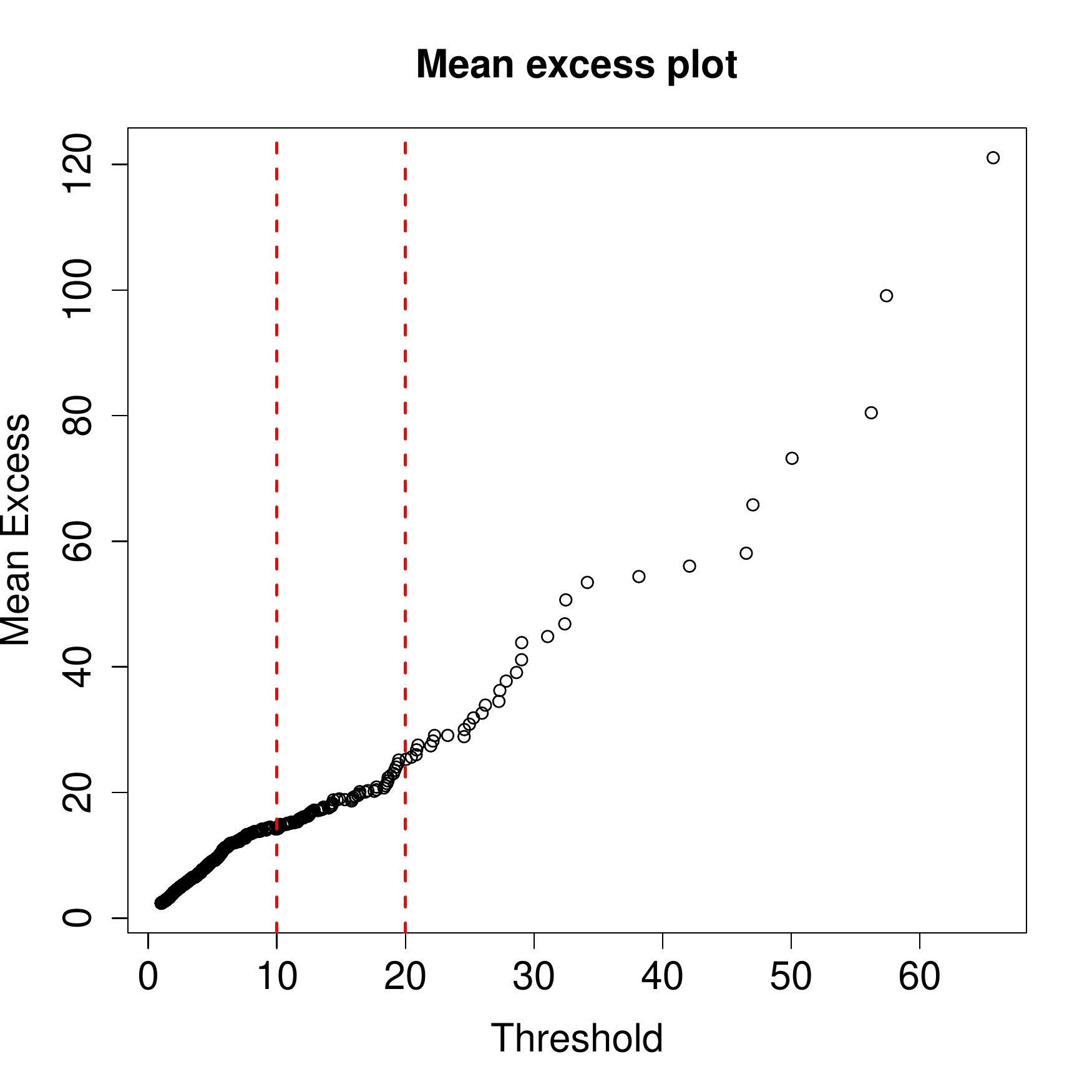}\vspace*{0.5cm}
  \caption{Exploratory analysis of Danish fire insurance data set. Left: histogram of log-transformed data. Right: mean excess plot: the mean excess function is approximately linear between threshold  u=9.97 and u=20 ( the two red dotted lines). }\label{fig:analysis}
\end{figure}

Using maximum likelihood estimation, we therefore fit a generalized Pareto distribution to the conditional distribution $X - u \mid X> u$ based on the exceedances $\{X_i : X_i > u\}$ of insurance claims above the $95 \%$ threshold $u = 9.97$. This results in a semi-parametric model for the tail of $X$, since 
\begin{align}
  \notag       
  P ( X > x) &= P( X > u) P( X - u > x-u \mid X>u)\\
  \label{eqFitted}  & \approx \hat{P}( X > u) \bigg(1+\dfrac{x-u}{\hat{\beta}\hat{\sigma}}\bigg)^{-\hat{\beta}}=: p_x, \quad x \geq u,
\end{align}
where $\hat{P}( X > u)$ is the empirical estimate of $P( X > u)$, and obtained the estimated scale $\hat{\sigma}=7.034$ and the estimated tail index $\hat{\beta}=2.03$ \citep[see also][]{McNeil1997}. The generalized Pareto distribution in~\eqref{eqFitted} is only an approximation of the tail of $X$, and for risk assessment we have to account for the model uncertainty induced by this modelling choice. From now on, the estimated generalized Pareto distribution will play the role of the reference model $\hat P$ around which different ambiguity sets are constructed. More precisely, we consider the worst-case tail $1 -  F^*_{\mathcal D}(x)$ in~\eqref{optim} for different divergence $\mathcal D$.

Based on our results in Sections~\ref{sWasserstein} and~\ref{sDivergence} on Wasserstein distance and $f$-divergences, respectively, we investigate two different approaches, described in the following.

\subsection{Pre-asymptotic analysis}

First, we consider a pre-asymptotic analysis where we do not rely on the approximations in Propositions~\ref{pPower} and~\ref{Pfdivergence}, but where we solve the optimization problem~\eqref{optim} explicitly for each $x>u$.
\begin{enumerate}
\item For the Wasserstein distance $\mathcal D = \WD_{\mathrm{d},1}$ for $\mathrm{d}$ as in Remark~\ref{rem1} for
some $s\geq 1$, according to the proof of Proposition~\ref{pPower}, the worst-case tail satisfies
\[
  1-F^*_{\WD}(x) = \hat P( U(x)^{1/s}, \infty), \quad  x > u,
\] 
where $U(x)$ is defined as the solution of the equation
\begin{equation}\label{eqUx}
  \delta = \int_{U(x)^{1/s}}^x (x^s-y^s)\,d\hat{F}(y),
\end{equation}
with $\hat F(y) = 1-\hat P(X >y)$. This equation can be solved numerically.
\item Similarly for the $f$-divergence $\mathcal D = D_f$, according to the proof of Proposition~\ref{Pfdivergence}, the worst-case tail satisfies
\[
  1-F^*_{f}(x) =  b_x \hat P( x, \infty), \quad  x > u,
\]
where $b_x\in (1,1/p_x)$ is defined as the solution of the equation
\begin{equation*}
  \delta = (1-p_x)f\bigg(\dfrac{1-p_xb_x}{1-p_x}\bigg) + p_xf(b_x),
\end{equation*}
with $p_x:=\hat{P}(x,\infty)$. Again, this equation can be solved numerically.
\end{enumerate}

First, we consider a pre-asymptotic analysis where we do not rely on the approximations in Propositions~\ref{pPower} and~\ref{Pfdivergence}, but where we solve the optimization problem~\eqref{optim} explicitly for each $x>u$.
\begin{enumerate}
\item For the Wasserstein distance $\mathcal D = \WD_{\mathrm{d},1}$ for $\mathrm{d}$ as in Remark~\ref{rem1} for
some $s\geq 1$, according to the proof of Proposition~\ref{pPower}, the worst-case tail satisfies
\[
  1-F^*_{\WD}(x) = \hat P( U(x)^{1/s}, \infty), \quad  x > u,
\] 
where $U(x)$ is defined as the solution of the equation
\begin{equation}\label{eqUx}
  \delta = \int_{U(x)^{1/s}}^x (x^s-y^s)\,d\hat{F}(y),
\end{equation}
with $\hat F(y) = 1-\hat P(X >y)$. This equation can be solved numerically.
\item Similarly for the $f$-divergence $\mathcal D = D_f$, according to the proof of Proposition~\ref{Pfdivergence}, the worst-case tail satisfies
\[
  1-F^*_{f}(x) =  b_x \hat P( x, \infty), \quad  x > u,
\]
where $b_x\in (1,1/p_x)$ is defined as the solution of the equation
\begin{equation*}
  \delta = (1-p_x)f\bigg(\dfrac{1-p_xb_x}{1-p_x}\bigg) + p_xf(b_x),
\end{equation*}
with $p_x:=\hat{P}(x,\infty)$. Again, this equation can be solved numerically.
\end{enumerate}

Before starting the procedure outlined above, we need to estimate the tolerance level $\delta$. In general, there are different ways to derive an empirical value of $\delta$. A data-driven way to determine the tolerance level involves computing the divergence between the reference model and some alternative model, inferred from a sample of data. The estimation procedure then focuses either on the density of models involved (\cite{EngelkeIvanovs2017}) or on the divergence itself (\cite{NguyenWainwrightJordan2010}, \cite{PoczosSchneider}). In the context of distributionally robust optimization of the mean-variance problem with Wasserstein ambiguity sets, \cite{BlanchetChenZhou2018} prove that the optimal choice of $\delta$ is of order $O(n^{-1})$, where $n$ is the number of returns in a time series data.

We aim to illustrate the differences between the tail of the reference model and the tails of worst-case models in the  Wasserstein and $f$-divergence ambiguity sets. We focus here on the Hellinger divergence, for which the parameters of the robust models are obtained in Example~\ref{exDivergence}. For the Wasserstein ambiguity set, we select a power $s<\hat{\beta}$ to construct the underlying distance $\mathrm{d}$ in Remark~\ref{rem1}. An estimate for the ambiguity radius $\delta$ is 
\[
  \delta_{\WD}:=\WD_{\mathrm{d},1}(\hat{F},\hat{F}_n), 
\]
where $\hat{F}$ is the fitted distribution, $\hat{F}_n$ is the empirical probability distribution and the Wasserstein distance $\WD_{\mathrm{d},1}$ is computed according to Remark~\ref{rem1}. For our example, the estimate $\delta_{\WD} = 3.2$ is used to solve numerically \eqref{eqUx} and thus obtain $U(x)$. In the case of Hellinger ambiguity set, we follow the methodology in \cite{BlanchetHeMurthy2020}. Let $(\hat{\beta}^{-1}-\varepsilon, \hat{\beta}^{-1}+\varepsilon)$ be the $95\%$ confidence interval for the shape parameter and choose the order $\alpha$ such that the reciprocal of the worst-case tail index (see Table~\ref{table:examples}) matches the upper end point of this interval, that is,
\begin{equation}\label{eqAlpha}
\hat{\beta}^{-1}\dfrac{\alpha}{\alpha-1} = \hat{\beta}^{-1} +\varepsilon.
\end{equation}
For our example, we have $\alpha = 2.86$. The $k$-nearest neighbor algorithm of \cite{PoczosSchneider} is then used to estimate the ambiguity radius $\delta_{\mathcal{H}}:= \mathcal{H}_\alpha(\hat{F}\Vert \hat{F}_n)$ between the fitted and the empirical distribution. In this case, we follow \cite{LallSharma1996} and choose $k$ of order $n^{1/2}$, where $n$ is the sample size. Using the estimation procedure in \cite{PoczosSchneider}, the resulting estimate is $\delta_{\mathcal{H}} =  0.01$. 

\begin{figure}[H]
  \begin{subfigure}{\linewidth}
  \includegraphics[width=.5\linewidth]{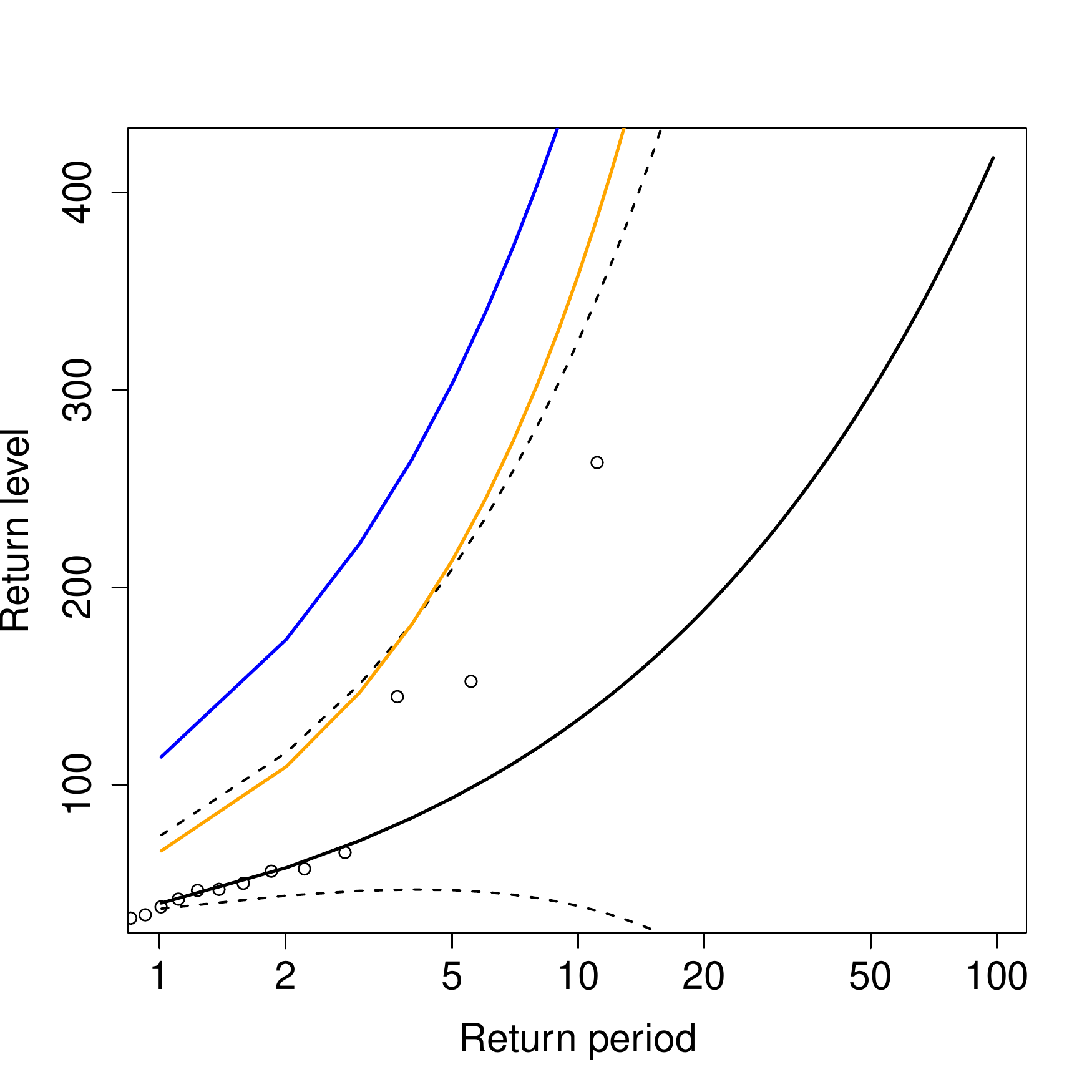}\hfill
  \includegraphics[width=.5\linewidth]{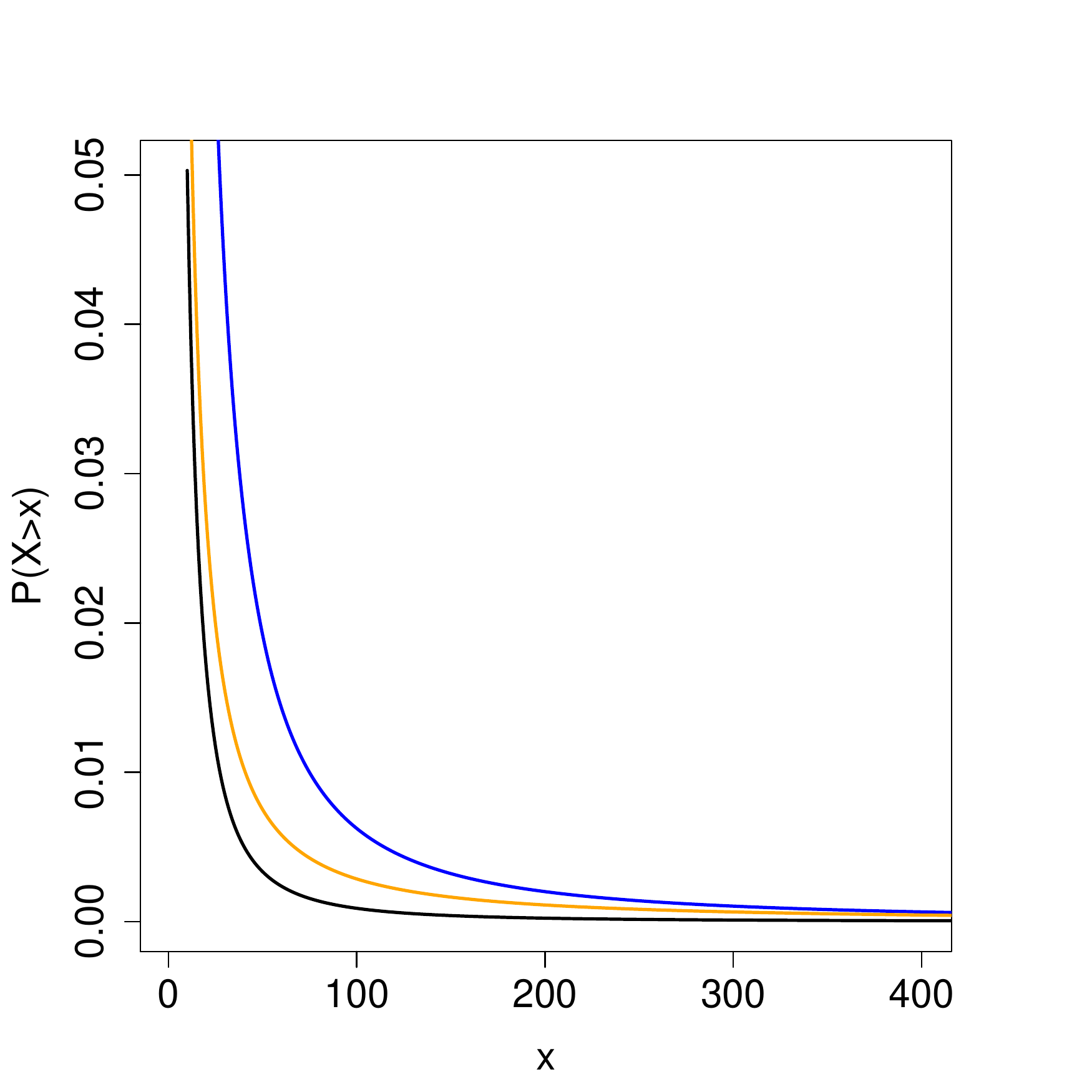}\hfill
  \end{subfigure}\vspace*{0.5cm}
  \caption{Left: the return levels (for return periods in years) of the reference model (fitted generalized Pareto distribution) at $95\%$ threshold (black) and corresponding $95\%$ confidence intervals (dashed lines); return levels of the worst-case model from Wasserstein ambiguity set with $s=1.5$ and $\delta_{\WD}= 3.2$ (blue) and from Hellinger ambiguity set with $\alpha=2.86$ and $\delta_{\mathcal{H}} = 0.01$ (orange). Right: the corresponding tail probabilities with the same color coding.}\label{fig:preasymptotic}
\end{figure}

The left-hand panel of Figure~\ref{fig:preasymptotic} shows the return levels (for return periods in years) of the fitted generalized Pareto distribution at $95\%$ threshold, the worst-case model in the Wasserstein ambiguity set with the power $s=1.5$ and estimated radius $\delta_{\WD}=3.2$, and the worst-case model in the Hellinger ambiguity set with the estimated order of divergence $\alpha=2.86$ and the estimated radius $\delta_{\mathcal{H}} = 0.01$. We observe that the return levels for $F_{\WD}^*$ are higher than the return levels for both the reference model $\hat{F}$ and $F_{\mathcal{H}_\alpha}^*$. This is explained by the fact that the worst-case distribution in the Wasserstein ambiguity set has a heavier tail, compared to the tail of the reference distribution and the worst-case distribution in the Hellinger ambiguity set; see right-hand panel of Figure~\ref{fig:preasymptotic}. It is interesting to note that both worst-case tails are only slightly more conservative than the confidence intervals. That indicates that there is not a large error due to model misspecification by using the generalized Pareto distribution in this data set.

As opposed to the order $\alpha$ in the Hellinger divergence, the order $s$ of the Wasserstein distance cannot be chosen in a data-driven way. Therefore we conducted a further study assessing the impact of different values of $s$ on the worst-case Wasserstein bounds (not shown here). It turns out that the bounds are fairly stable across different values for $s$, and it is more important to estimate the corresponding $\delta$ values accurately.

\subsection{Asymptotic analysis}

In the previous section, we have compared the estimated model $\hat{F}$ with pre-asymptotic worst-case models in different neighborhoods around $\hat{F}$. However, we can avoid the numerical optimization and rely on the asymptotic approximations for the worst-case tails derived in Sections~\ref{sWasserstein} and~\ref{sDivergence}.

\begin{enumerate}
\item For the Wasserstein distance $\mathcal D = \WD_{\mathrm{d},1}$ for $\mathrm{d}$ as in Remark~\ref{rem1} for some $s\geq 1$, according to Proposition~\ref{pPower}, the worst-case tail satisfies
\[
  1-F^*_{\WD}(x) \sim \delta x^{-s}, \quad \text{ as } x \to \infty.
\] 
\item Similarly for the $f$-divergence $\mathcal D = D_f$, according to Proposition~\ref{Pfdivergence}, the worst-case tail satisfies
\[
  \overline{F}^*_f(x)\sim f^\leftarrow(\delta/p_x)p_x, \quad \text{ as } x\to \infty.
\]
\end{enumerate}
Similarly as in the pre-asymptotic setting, we fix as reference model $\hat{F}$ the semi-parametric model fit in~\eqref{eqFitted}. Figure~\ref{fig:asymptotic} shows the return levels (left) computed from this reference model together with $95\%$ confidence intervals, the return levels from the robust Wasserstein model with the power of distortion $s=1.5$ and the return levels from the robust Hellinger model at estimated order $\alpha=2.86$. The parameters are chosen according to the procedure outlined in the previous section. The computations are performed for ambiguity radii $\delta_{\WD}=3.2$ and $\delta_{\mathcal{H}}=0.01$. The right panel illustrates the tail decays for $\hat{F}$, $F_{\WD}^*$ and $F_{\mathcal{H}_\alpha}^*$.

\begin{figure}[H]\vspace*{-1cm}
  \begin{subfigure}{\linewidth}
  \includegraphics[width=.5\linewidth]{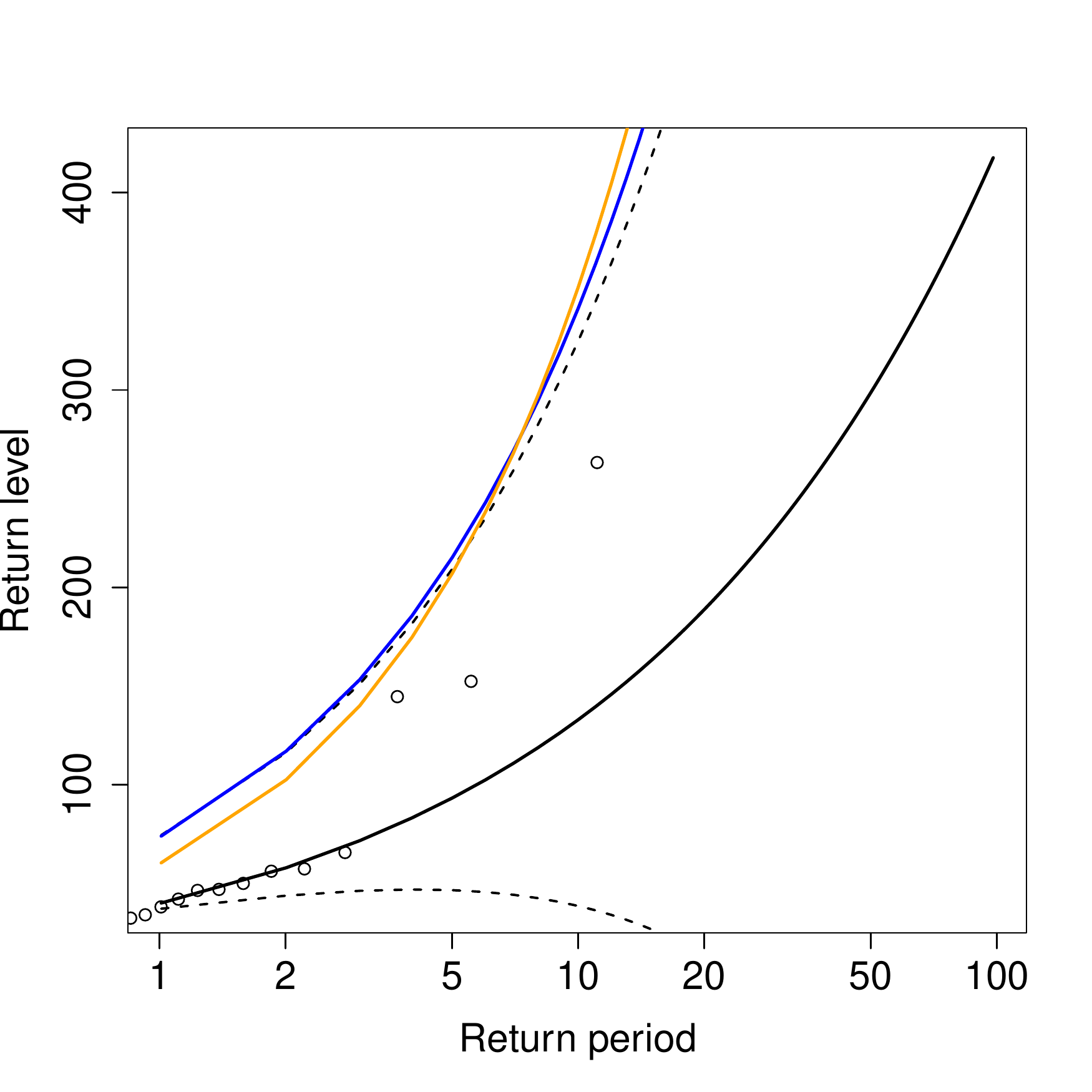}\hfill
  \includegraphics[width=.5\linewidth]{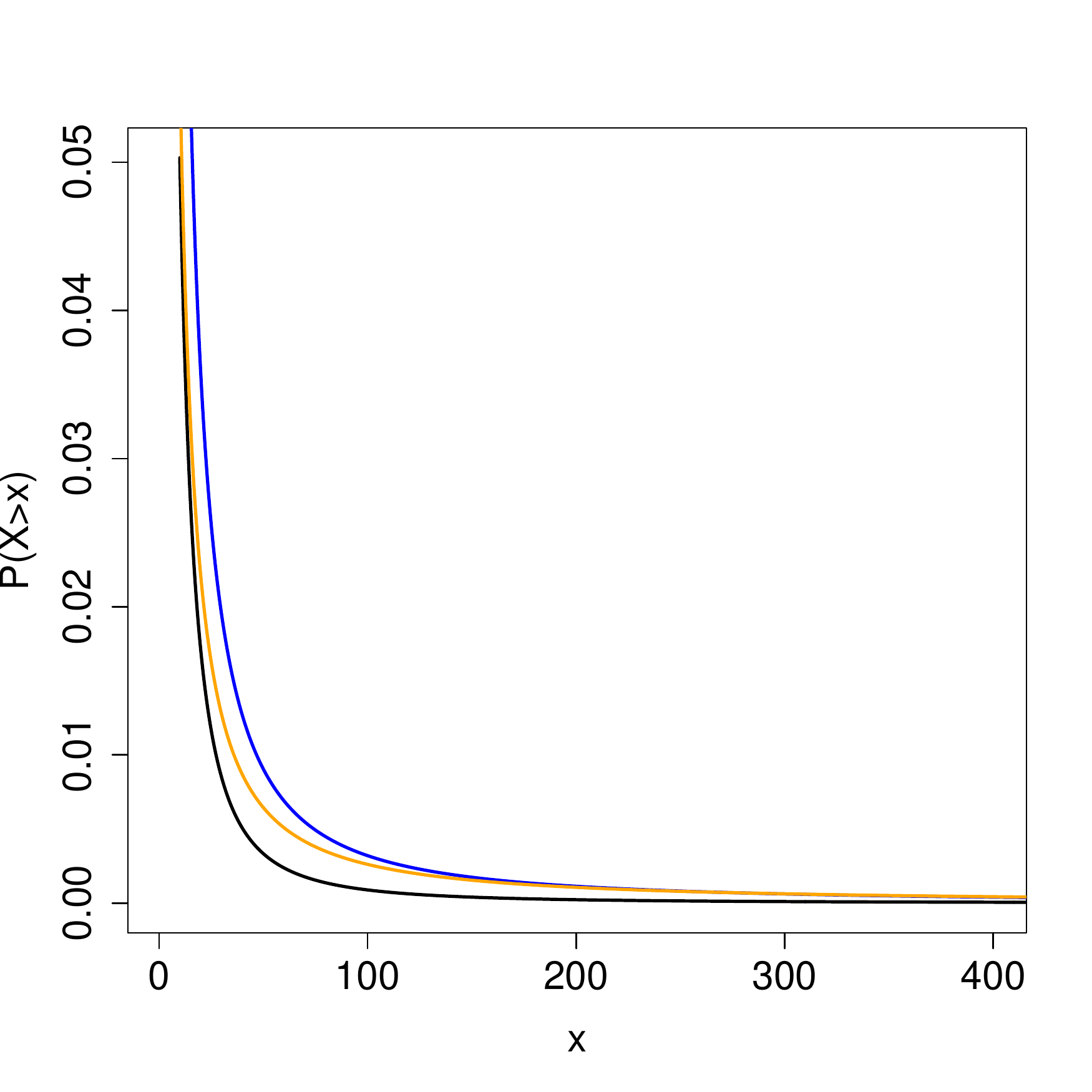}\hfill
  \end{subfigure}\vspace*{0.3cm}
  \caption{The return levels (left) and the tail decays (right) of the reference model at $95\%$ threshold (black), $95\%$ CI (dashed line),  worst-case model from Wasserstein set (blue) and Hellinger (orange) for $s=1.5$ and $\delta_{\WD} = 3.2$, and $\alpha=2.86$ and $\delta_{\mathcal{H}}=0.01$, respectively.}\label{fig:asymptotic}
\end{figure}

The figures show that the asymptotic robust bounds are even closer to the $95\%$ confidence intervals of the references model. This is another indication that the class of generalized Pareto distributions is a good model for the data above the threshold $u$. The fact that we have chosen a ``good'' threshold $u$ based on the mean excess plot in Figure~\ref{fig:analysis}, has certainly helped to minimize model uncertainty in the first place. We observe, however, that especially far in the tail, the tails of the worst-case distributions $F_{\WD}^*$ and $F_{\mathcal{H}_\alpha}^*$ are heavier than the tail of $\hat{F}$. 

%===========================================================================================================

\bibliographystyle{Chicago}
\bibliography{bibliography}

%===========================================================================================================

\begin{appendices}

\section{Technical result to complete the proof of Remark~\ref{rAssump}}\label{appendixA}

\begin{proposition}\normalfont
  Let $f\in\mathcal{C}^1(0,\infty)$ be a strictly convex divergence function such that $f\in\mathcal{R}_{\rho}$ with $\rho \geq 0$. Define the divergence $\tilde{f}:(0,\infty)\rightarrow\mathbb{R}$, $\tilde{f}(y):=f(y)-f'(1)(y-1)$. Then $\tilde{f}$ satisfies the following properties:
  \begin{enumerate}[label=(\alph*)]
    \item $\tilde{f}'(1)=0$;
    \item $\tilde{f}\geq 0$ on $[1,\infty)$;
    \item $\tilde{f}$ is increasing on $[1,\infty)$;
    \item there exists some $b\in\mathbb{R}_+$ such that the divergence $f:[0,\infty)\rightarrow \mathbb{R}$, $f_1(y)=\tilde{f}(y) + b(y-1)$ is such that $f_1\in\mathcal{R}_{\geq 1}$ and $D_{\tilde{f}}(P\Vert Q) = D_{f_1}(P\Vert Q)$, for all $P\ll Q$.
  \end{enumerate}
\end{proposition}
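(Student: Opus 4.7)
The proof breaks naturally into parts (a)--(c), which follow from elementary convex analysis applied to $\tilde f$, and part (d), which splits into a shift-invariance argument for the divergence and a case analysis in regular variation. For (a)--(c) I would begin with $\tilde f'(y)=f'(y)-f'(1)$, which gives (a) on evaluation at $y=1$, and note that $\tilde f''=f''>0$ on $(0,\infty)$, so $\tilde f$ is strictly convex and $\tilde f'$ strictly increasing. Combined with $\tilde f'(1)=0$, this yields $\tilde f'(y)>0$ for every $y>1$, which is (c). Since in addition $\tilde f(1)=f(1)=0$, strict monotonicity on $[1,\infty)$ then gives $\tilde f\geq 0$ there, which is (b).

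For (d), the identity $D_{f_1}(P\Vert Q)=D_{\tilde f}(P\Vert Q)$ is immediate from the shift-invariance of $f$-divergences (property~7 listed after Definition~\ref{def:fdiv}), since $f_1-\tilde f = b(y-1)$. For the regular-variation part, I would pick $b\in\mathbb{R}_+$ large enough that the effective coefficient $c:=b-f'(1)>0$ (for instance $b=|f'(1)|+1$), so that $f_1(y)=f(y)+c(y-1)$ is a sum of two regularly varying functions with strictly positive leading coefficients. Writing $f(y)=y^{\rho}\ell(y)$ with $\ell\in\mathcal{R}_0$, I would then distinguish two easy regimes: if $\rho>1$, the first summand dominates the linear part and $f_1\sim f$, so $f_1\in\mathcal{R}_{\rho}$; if $\rho<1$, then $f(y)/y\to 0$ while $c(y-1)/y\to c>0$, so $f_1(y)\sim cy$ and $f_1\in\mathcal{R}_{1}$. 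In both of these cases $f_1\in\mathcal{R}_{\max(\rho,1)}$, which has index at least $1$.

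The main obstacle is the borderline case $\rho=1$, where both summands have order $y$ and the slowly varying factor $\ell$ need not even admit a limit (it may diverge to $+\infty$ or oscillate within a bounded interval). In this case I would verify the definition directly: $f_1(y)/y=\ell(y)+c+O(1/y)$, and since $c>0$ keeps this quantity bounded away from zero, the slow-variation property $\ell(ty)/\ell(y)\to 1$ transfers to $(\ell(ty)+c)/(\ell(y)+c)\to 1$ via the identity $(\ell(ty)-\ell(y))/(\ell(y)+c)=[(\ell(ty)-\ell(y))/\ell(y)]\cdot[\ell(y)/(\ell(y)+c)]\to 0$. Consequently $f_1(ty)/f_1(y)\to t$, i.e.\ $f_1\in\mathcal{R}_1$. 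Equivalently, this amounts to checking that $\ell+c$ is slowly varying whenever $c>0$, a standard consequence of Karamata's representation theorem. Collecting the three cases yields $f_1\in\mathcal{R}_{\geq 1}$, as required.
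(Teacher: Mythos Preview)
Your argument for (a)--(c) matches the paper's in spirit, but be careful: you invoke $\tilde f'' = f'' > 0$, yet the hypothesis is only $f\in\mathcal{C}^1$, so a second derivative need not exist. The fix is immediate---strict convexity of a $\mathcal{C}^1$ function already forces $f'$ (hence $\tilde f'$) to be strictly increasing, and from there your deductions of (c) and then (b) go through unchanged. The paper phrases the same facts via the first-order condition $\tilde f(y_2)-\tilde f(y_1)\geq \tilde f'(y_1)(y_2-y_1)$ rather than via monotonicity of $\tilde f'$ directly, but the content is identical.

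For (d) your route genuinely differs from the paper's and is in fact more complete. The paper restricts to $\rho\in[0,1)$ and splits on the sign of $f'(1)$: if $f'(1)<0$ it takes $b=0$ (so $f_1=\tilde f$ itself), and if $f'(1)>0$ it takes any $b>f'(1)$; in each subcase it simply asserts $f_1\in\mathcal{R}_1$ without further detail, and it does not treat $\rho\geq 1$ or the borderline $\rho=1$ at all. Your single choice $b=|f'(1)|+1$ avoids the sign split, covers all $\rho\geq 0$ uniformly, and your explicit verification of the $\rho=1$ case---showing that $\ell+c$ remains slowly varying whenever $c>0$ via the identity $(\ell(ty)-\ell(y))/(\ell(y)+c)=[\ell(ty)/\ell(y)-1]\cdot[\ell(y)/(\ell(y)+c)]\to 0$---fills a gap the paper leaves open. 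The paper's choice $b=0$ when $f'(1)<0$ is marginally sharper (it shows $\tilde f$ itself already has index at least $1$), but your argument is the cleaner and more robust one.
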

\begin{proof}
  \begin{enumerate}[label=(\alph*)]
    \item  Clear by definition of $\tilde{f}$. 
    \item The first order condition for differentiable convex functions yields that $\tilde{f}(y) \geq \tilde{f}'(1)(y-1)\geq 0$ for all $y\geq 1$.
    \item Since $f$ is strictly convex, then $\tilde{f}$ is strictly convex and thus $\tilde{f}'$ is increasing. For $1\leq y_1<y_2$, the first order condition states that 
    \[
    \tilde{f}(y_2)-\tilde{f}(y_1)\geq \tilde{f}'(y_1)(y_2-y_1)\geq \tilde{f}'(1)(y_2-y_1)= 0,
    \]
    where the last inequality follows from the monotonicity of $\tilde{f}'$.
    \item Assume that $f\in\mathcal{R}_\rho$ is regularly varying with index $\rho\in [0,1)$. If $f'(1)<0$, then $\tilde{f}$ is regularly varying function of index $\max\lbrace \rho,1\rbrace =1$ and, in this case, $b=0$. Else, if $f'(1)>0$, let $b>f'(1)$ and define $f_1(y)=\tilde{f}(y)+b(y-1)= f(y)+ (b-f'(1))(y-1)$. Also in this case, $f_1$ is regularly varying of index $\max\lbrace \rho,1\rbrace=1$.  
  \end{enumerate}
\end{proof}

%===========================================================================================================

\section{Technical result to complete the proof of Proposition~\ref{Pfdivergence}}\label{appendixB}

Lemma \ref{lPxbx} below ensures the convergence needed to finish the proof of Proposition \ref{Pfdivergence}.

\begin{lemma} \label{lPxbx}\normalfont
  Let $\hat{P}$ be a reference model and let $x_0\geq 0$ such that $p_x:=\hat{P}(x,\infty)>0$ for all $x \geq x_0$ and $\lim_{x\to \infty} p_x = 0$. Let $f:(0,\infty)\rightarrow\mathbb{R}$ be a divergence satisfying Assumption~\ref{aF1}. Suppose that for all $x \geq x_0$ there exists $b_x\in (1,1/p_x)$ such that
\begin{equation}\label{eqBx}
p_xf(b_x)+(1-p_x)f\bigg(\dfrac{1-p_xb_x}{1-p_x}\bigg)=\delta,
\end{equation}
where $\delta<f^*(0)+f(0)$. Then it holds that
\begin{itemize}
\item[(i)]
  $\lim_{x\rightarrow\infty}p_xb_x=0$ if $f^*(0) = \infty$;
\item[(ii)]
  $\lim_{x\rightarrow\infty}p_xb_x= \ell \in (0,1]$ if $f^*(0) \in (0,\infty)$.
\end{itemize}
\end{lemma}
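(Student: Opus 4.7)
The plan is to reparametrize and then argue by contradiction / subsequences. First I would set $c_x := p_x b_x \in (p_x,1)$, which lies in the compact interval $[0,1]$, and rewrite \eqref{eqBx} as
\[
\delta \;=\; c_x\cdot\frac{f(c_x/p_x)}{c_x/p_x} \;+\; (1-p_x)\,f\!\left(\frac{1-c_x}{1-p_x}\right).
\]
Both summands are nonnegative because Assumption~\ref{aF1} (strict convexity with $f'(1)=0$, $f(1)=0$) forces $f\ge 0$. The key identity to exploit is that $f^*(0)=\lim_{y\to 0^+}y\,f(1/y)=\lim_{z\to\infty}f(z)/z$, so the first summand resembles a scaled conjugate evaluation.

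For part (i), I would argue by contradiction. Suppose some subsequence $c_{x_n}$ converges to $c^*>0$. Since $p_{x_n}\to 0$ and $c_{x_n}$ is eventually bounded below by $c^*/2$, we have $c_{x_n}/p_{x_n}\to\infty$, and hence $f(c_{x_n}/p_{x_n})/(c_{x_n}/p_{x_n})\to f^*(0)=\infty$. The first summand therefore diverges; as the second summand remains nonnegative, the equation forces $\delta=\infty$, contradicting $\delta<f^*(0)+f(0)=\infty$. Hence every subsequential limit of $c_x$ is $0$, proving $c_x\to 0$.

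For part (ii), by Bolzano--Weierstrass every subsequence of $(c_x)$ has a further convergent subsequence with some limit $c^*\in[0,1]$. I would rule out the two endpoints: if $c^*=0$, both summands tend to $0$ (the first by $f(c_{x_n}/p_{x_n})/(c_{x_n}/p_{x_n})\to f^*(0)<\infty$ together with $c_{x_n}\to 0$, the second by continuity at $1$), contradicting $\delta>0$; if $c^*=1$, then $(1-c_{x_n})/(1-p_{x_n})\to 0$ and the two summands converge to $f^*(0)$ and $f(0)$ respectively, giving $\delta=f^*(0)+f(0)$, contradicting the hypothesis $\delta<f^*(0)+f(0)$. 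So $c^*\in(0,1)$; passing to the limit inside \eqref{eqBx} then yields
\[
\Phi(c^*)\;:=\;c^*f^*(0)+f(1-c^*)\;=\;\delta.
\]

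Finally, I would establish uniqueness of the limit via strict monotonicity of $\Phi$ on $[0,1]$. Differentiating gives $\Phi'(\ell)=f^*(0)-f'(1-\ell)$, which at $\ell=0$ equals $f^*(0)-f'(1)=f^*(0)>0$, while strict convexity of $f$ (so $f'$ strictly increasing, thus $f'(1-\ell)$ strictly decreasing in $\ell$) makes $\Phi'$ strictly increasing, and in particular $\Phi'>0$ on $[0,1]$. Therefore $\Phi$ is strictly increasing with $\Phi(0)=0<\delta<f^*(0)+f(0)=\Phi(1)$, so there is a unique $\ell\in(0,1)$ with $\Phi(\ell)=\delta$. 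Every subsequential limit equals this $\ell$, hence $c_x\to\ell$. The main (mild) obstacle is handling the boundary case $c^*=1$ when $f(0)=\infty$, which I resolve by invoking the strict inequality $\delta<f^*(0)+f(0)$ together with monotone behaviour of the second summand near $c^*=1$.
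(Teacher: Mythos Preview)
Your argument is essentially the same as the paper's: both proceed by contradiction on subsequential limits of $c_x=p_xb_x$, rule out the endpoints $0$ and $1$ in case~(ii), and then use strict monotonicity of $\Phi(\ell)=\ell f^*(0)+f(1-\ell)$ (the paper calls it $g$) to pin down the unique limit. One small imprecision worth fixing: in your treatment of the accumulation point $c^*=0$ in part~(ii), the claim $f(b_{x_n})/b_{x_n}\to f^*(0)$ presupposes $b_{x_n}\to\infty$, which need not hold along such a subsequence; the clean way (which the paper also implicitly uses) is to note that when $f^*(0)<\infty$ the ratio $f(b)/b$ is bounded on $[1,\infty)$ by continuity and the finite limit, so $c_{x_n}\cdot f(b_{x_n})/b_{x_n}\to 0$ regardless of whether $b_{x_n}$ diverges.
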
 
\begin{proof}
For (i), assume by contrary that $\lim\sup_{x\rightarrow\infty}p_xb_x= \ell \in(0,1]$. By \eqref{eqBx} it then follows that  $\lim\sup_x p_xf(b_x)<\infty$. Moreover, since $p_x \to 0$, we have that $b_x \to \infty$. Consequently,
\[
\lim_{x\rightarrow\infty} p_xf(b_x) =  \lim_{x\rightarrow\infty} p_xb_x f(b_x)/ b_x = \ell \lim_{x\rightarrow\infty} f(b_x)/ b_x =\infty,
\]
since $\lim_{y\to\infty} f(y)/y =  f^*(0) = \infty$.
This contradicts $\lim\sup_x p_xf(b_x)<\infty$ and thus  $\lim_{x\rightarrow\infty}p_xb_x = 0$.

For (ii), suppose that $p_x b_x$ has the accumulation point $\ell = 0$ for the sequence $(x_k)$. The limit of the left-hand side of \eqref{eqBx} is equal to the limit $\lim_{k \to \infty} p_{x_k}f(b_{x_k})$ since $f(1) = 0$. Thus, since $p_{x_k} \to 0$, for \eqref{eqBx} to hold it is necessary that $f(b_{x_k}) \to \infty$, and since $b_{x_k} > 1$ that means that $b_{x_k} \to \infty$. Since then $f(b_{x_k}) \sim f^*(0) b_{x_k}$ we have $\lim_{k \to \infty} p_{x_k}f(b_{x_k}) = \lim_{k \to \infty} p_{x_k}b_{x_k} f^*(0) = 0$. This contradicts the fact that $b_x$ solves  \eqref{eqBx}. Therefore $0$ is not an accumulation point of $p_x b_x$.

Suppose now that $p_x b_x$ has the accumulation point $\ell \in(0,1]$ for the sequence $(x_k)$. Then $\lim_{k \to \infty} b_{x_k} = \infty$. For large $k$, the limit of the left-hand side of \eqref{eqBx} for this sequence as $k\to \infty$ is $g(\ell) := \ell f^*(0) + f(1-\ell)$ since $f(y) \sim f^*(0) y$. Note that $g'(\ell) = f^*(0) - f'(1 - \ell) > f^*(0) - f'(1) = f^*(0) > 0$ for $\ell \in (0,1]$. The function $g$ is therefore strictly increasing on $(0,1]$ with $g(0) = 0$ and $g(1) = f^*(0) + f(0)$. Since $\delta < f^*(0) + f(0)$, there exists a unique $\ell\in (0,1]$ with $g(\ell) = \delta$. Therefore, all accumulation points of $p_x b_x$ are the same and  $\lim_{x\rightarrow\infty}p_xb_x= \ell \in (0,1]$. 

\end{proof}

%===========================================================================================================

\section{Worst-case distribution for $f$-divergence ambiguity sets}\label{appendixC}

Let the reference model $\hat{P}$ be a generalized extreme value distribution with tail index $\hat{\beta}>0$ such that 
\[ 
  p_x=\hat{P}(x,\infty)\approx \bigg(\dfrac{x}{\hat{\beta}\hat{\sigma}}\bigg)^{-\hat{\beta}}, \quad  x\rightarrow\infty.
\]

\begin{enumerate}
\item \textbf{Kullback--Leibler divergence.} Since $f(y)=y\log(y)$, then $f^{-1}(t)=\dfrac{t}{W(t)}$, where $W(t)$ is the Lambert function, for which it holds that $W(t)=\log(t)-\log(\log(t))+o(1)$ (see \cite{Hassani2005}). It follows that 
\[
\overline{F}^*_{\KL}(x)\sim f^{-1}(\delta p_x^{-1})p_x = \dfrac{\delta}{\log(\delta p_x^{-1})-\log(\log(\delta p_x^{-1}))+o(1)}\sim\dfrac{\delta}{\log(\delta p_x^{-1})}\approx \dfrac{\delta}{\hat{\beta}\log(x)},
\]
as $x\rightarrow\infty$.
%===========================================================================================================

\item \textbf{Hellinger divergence.} The worst-case tail distribution $\overline{F}^*_{\mathcal{H}_\alpha}$ in a Hellinger divergence ball is of the form:
\begin{equation*}
\begin{split}
\overline{F}^*_{\mathcal{H}_\alpha} & \sim f^{-1}(\delta p_x^{-1})p_x = \big(1+\delta(\alpha-1)p_x^{-1})\big)^{1/\alpha}p_x = \big[\delta(\alpha-1)p_x^{-1}\big]^{1/\alpha}\big(p_x(\delta(\alpha-1))^{-1}+1\big)^{1/\alpha}p_x \\ 
& \approx p_x^{1-1/\alpha}\big[\delta(\alpha-1)\big]^{1/\alpha} \approx(\delta(\alpha-1))^{1/\alpha} \bigg(\dfrac{x}{\hat{\beta}\hat{\sigma}}\bigg)^{-\hat{\beta}(1-1/\alpha)}, \, x\rightarrow\infty.
\end{split}
\end{equation*}
where we use $(1+y)^a\approx 1+ay$, for $\vert y\vert <1$ and $\vert ay\vert \ll 1$.  Hence $\overline{F}^*_{\mathcal{H}_\alpha}(x)\approx \bigg(\dfrac{x}{\beta^*\sigma^*}\bigg)^{-\beta^*}$ with $\beta^*=\dfrac{\alpha-1}{\alpha}\hat{\beta}$ and $\sigma^*= \alpha(\alpha-1)^{\frac{1}{\hat{\beta}(\alpha-1)}-1}\delta^{\frac{1}{\hat{\beta}(\alpha-1)}}\hat{\sigma}$.

In particular, for $\alpha=2$, the worst-case tail distribution in a $\chi^2$-divergence case has a tail index $\beta^*=\hat{\beta}/2$ and a scale $\sigma^*=2\delta^{1/\hat{\beta}}\sigma$.

%===========================================================================================================

\item \textbf{Triangle discrimination.} For $f(y)=\dfrac{(y-1)^2}{y+1}$, observe that $\lim_{y\rightarrow\infty} f(y)/y=1$. Thus the corresponding worst-case distribution is $\overline{F}^*_{\Delta}(x)\sim \ell$, where $\ell$ solves the equation 
\[
  \ell + \dfrac{\ell^2}{2-\ell}=\delta.
\]
We obtain that $\ell= \dfrac{2\delta}{\delta + 2}\leq 1$, since $\delta\leq f^*(0)+f(0)=2$.

%===========================================================================================================

\item \textbf{Jeffrey divergence.} Let us denote by $f_1(y):=(y-1)\log(y)$ and $f_2(y):=y\log(y)$ the functions corresponding to Jeffrey- and $\KL$- divergence, respectively. Since both $f_1,f_2$ are increasing and positive on $[1,\infty)$ and moreover, $f_2\in\mathcal{R}_{-1}$, it follows from Theorem A in \cite{DjurvcicTorgavsev2007} that if $f_1(y)\sim f_2(y)$, as $y\rightarrow\infty$, then $f_1^{-1}(t)\sim f_2^{-1}(t)$, as $t\rightarrow\infty$. The worst-case tail in the Jeffrey divergence ambiguity set becomes $\overline{F}^*_{f_1}(x)\sim f_1^{-1}(\delta p_x^{-1})p_x\sim f_2^{-1}(\delta p_x^{-1})p_x\sim\delta\hat{\beta}^{-1}\log^{-1}(x)$.

%===========================================================================================================

\item \textbf{Jensen--Shannon divergence.} We observe that $f(y)\sim y\log(2)$, as $y\rightarrow\infty$ and $y\log(2)\in\mathcal{R}_1$, where $f(y)=y\log(y)-(1+y)\log((y+1)/2)$. The corresponding worst-case distribution is $\overline{F}^*_f(x)\sim \ell$, where $\ell$ solves the equation 
\[
  \ell \log(2) +   (1-\ell)\log(1-\ell)-(2-\ell)\log((2-\ell)/2)=\delta, 
\]
where $\delta\leq f^*(0)+f(0)=2\log(2)$.

%===========================================================================================================

\item \textbf{R{\'e}nyi divergence: } Problem \eqref{optim} has been studied before in \cite{BlanchetHeMurthy2020} when the measure of discrepancy is the R{\'e}nyi divergence. As the R{\'e}nyi divergence is connected to the  Hellinger divergence, we can recover the tail index of the worst-case distribution obtained in \cite{BlanchetHeMurthy2020}.

\begin{definition}\normalfont
Consider two probability measures $P$ and $Q$ on $S$ such that $P$ is absolutely continuous with respect to $Q$. For any $\alpha>1$, the R{\'e}nyi divergence of degree $\alpha$ is defined as
\[
D_\alpha(P\Vert Q):=\dfrac{1}{\alpha-1}\log\mathbb{E}_Q\bigg[\bigg(\dfrac{dP}{dQ}\bigg)^\alpha\bigg].
\]
\end{definition}

The R{\'e}nyi divergence of order $\alpha\geq 1$ is a one-to-one transformation of the Hellinger divergence:
\[
D_\alpha(P\Vert \hat{P})=\dfrac{1}{\alpha-1}\log(1+(\alpha-1)\mathcal{H}_\alpha(P\Vert \hat{P})). 
\]
Then $D_\alpha(P\Vert \hat{P})=\delta$ is equivalent to $\mathcal{H}_\alpha(P\Vert \hat{P})=\dfrac{\exp((\alpha-1)\delta)-1}{\alpha-1}=\overline{\delta}$.  As $p_x\approx\bigg(\dfrac{x}{\hat{\beta}\hat{\sigma}}\bigg)^{-\hat{\beta}}$, then the worst-case tail distribution has a tail index $\beta^*=\dfrac{\alpha-1}{\alpha}\hat{\beta}$ and a scale parameter $\sigma^*$ of the form 
\begin{equation*}
\sigma^* = \alpha(\alpha-1)^{1/(\hat{\beta}(\alpha-1)-1)}\overline{\delta}\,^{1/(\hat{\beta}(\alpha-1))}\hat{\sigma} = \dfrac{\alpha}{\alpha-1}\big(\exp((\alpha-1)\delta)-1\big)^{1/(\hat{\beta}(\alpha-1))}\hat{\sigma}.
\end{equation*}
\end{enumerate}

\end{appendices}
\end{document}